\theoremstyle{plain}
\newtheorem{thm}[equation]{Theorem}
\newtheorem{cor}[equation]{Corollary}
\newtheorem{prop}[equation]{Proposition}
\newtheorem{lem}[equation]{Lemma}
\theoremstyle{definition}
\newtheorem{defn}[equation]{Definition}
\newtheorem{conj}[equation]{Conjecture}
\newtheorem{notation}[equation]{Notation}
\newtheorem{question}[equation]{Question}
\newtheorem{example}[equation]{Example}
\newtheorem{rem}[equation]{Remark}
\numberwithin{equation}{section}
\DeclareMathOperator{\Lie}{\mathrm{Lie}}
\DeclareMathOperator{\Tr}{\mathrm{Tr}}
\DeclareMathOperator{\Hilb}{\mathrm{Hilb}}
\newcommand{\beq}{\begin{equation}\label}
\newcommand{\eeq}{\end{equation}}
\DeclareMathOperator{\Spec}{\mathrm{Spec}}
\newcommand{\iso}{{\;\stackrel{_\sim}{\to}\;}}
\newcommand{\slt}{{\mathfrak{sl}}_2}
\newcommand{\sltt}{(\slt,T)}
\renewcommand{\o}{\otimes }
\renewcommand{\t}{{\mathfrak t}}
\newcommand{\la}{\lambda}
\newcommand{\h}{{{\mathfrak h}}}
\newcommand{\mc}{\mathcal}
\newcommand{\C}{\mathbb{C}}
\newcommand{\Z}{{\mathbb Z}}
\newcommand{\sset}{\subset}
\newcommand{\sminus}{\smallsetminus}
\newcommand{\into}{\,\hookrightarrow\,}
\newcommand{\mto}{\mapsto}
\DeclareMathOperator{\Irr}{\mathrm{Irr}}
\newcommand{\ds}{{\dots}}
\newcommand{\s}{\mathfrak{S}}
\newcommand{\Cs}{\C^\times}
\newcommand{\mf}{\mathfrak}
\newcommand{\Stab}{\mathrm{Stab}}
\newcommand{\X}{\mathfrak{X}}
\newcommand{\nc}{\newcommand}
\nc{\BM}{\mathbf{M}} 
\nc{\BG}{\mathbf{G}}
\nc{\BQ}{\mathbf{Q}}
\nc{\BL}{\mathbf{L}}
\nc{\BT}{\mathbf{T}}
\nc{\BH}{\mathbf{H}}
\nc{\LL}{{L'}}
\nc{\LSL}{\mathrm{L}}
\nc{\MSL}{\mathrm{M}}
\nc{\MM}{{M'}}
\nc{\tSL}{\mf{t}}
\nc{\TSL}{\mathrm{T}}
\nc{\To}{{T}^{\circ}}
\nc{\tcL}{\t_{\LGL}} 
\nc{\tsL}{\t^{\symL}} 
\nc{\tsLo}{(\t^{\symL})^{\circ}}
\nc{\tLo}{\t^{\circ}} 
\nc{\Xo}{\X^{\circ}}
\nc{\Lo}{L^{\circ}}
\nc{\Mo}{M^{\circ}}
\nc{\ZMo}{Z(M)^{\circ}}
\nc{\paX}{\partial \X}
\newcommand{\ms}{\mathscr}
\nc{\twist}{\mathrm{twist}}
\nc{\Span}{\mathrm{Span}}
\nc{\Ps}{\mathbb{P}}
\renewcommand{\H}{\mathsf{H}}
\nc{\Mod}{\mathrm{Mod} \,}
\nc{\ras}{\Lambda}
\nc{\Vect}{\mathrm{Vect}}
\newcommand{\sgn}{\mathrm{sgn}}
\newcommand{\vi}{${\sf {(i)}}\;$}
\newcommand{\vii}{${\sf {(ii)}}\;$}
\newcommand{\cX}{\mathsf{X}}
\renewcommand{\H}{\mathsf{H}}
\nc{\by}{\mathbf{y}}
\nc{\bx}{\mathbf{x}} 
\nc{\bz}{\mathbf{z}} 
\nc{\Y}{\mathcal{Y}}
\nc{\bY}{\overline{\Y}}
\nc{\IC}{\mathrm{IC}}
\nc{\Thetasph}{\Theta^{\mathrm{spher}}}
\nc{\Orb}{\mathbb{O}}
\nc{\bs}{\mathbf{s}}
\nc{\bp}{\mathbf{p}}
\nc{\bq}{\mathbf{q}}
\nc{\bc}{\mathbf{c}}
\nc{\br}{\mathbf{r}}
\nc{\Sing}{\mathrm{Sing}}
\nc{\cyc}{\mathrm{cyc}}
\nc{\ol}{\overline}
\nc{\abT}{\mathbb{T}}
\nc{\bw}{\mathbf{w}}
\nc{\mm}{\ms{M}}
\nc{\Cas}{\Pi}
\nc{\Jac}{\mathbb{J}}
\nc{\SSL}{{\operatorname{SL}}}
\nc{\gap}{{}_{}}
\nc{\stair}{\mathbf{m}}
\nc{\minusone}{-1}
\nc{\cY}{\mathsf{Y}}
\begin{document}

\title{{\textbf{$SL_2$-action on Hilbert schemes and Calogero-Moser spaces}}}

\author{Gwyn Bellamy}
\address{School of Mathematics and Statistics, University of Glasgow, University Gardens, Glasgow G12 8QW}
\email{gwyn.bellamy@glasgow.ac.uk}

\author{Victor Ginzburg}\address{
Department of Mathematics, University of Chicago,  Chicago, IL 
60637, USA.}
\email{ginzburg@math.uchicago.edu}

\begin{abstract}
We study the natural $GL_2$-action  on the Hilbert scheme of points in
the plane, resp. $SL_2$-action  on the Calogero-Moser space. We describe the closure
of the $GL_2$-orbit, resp. $SL_2$-orbit, of  each point fixed by the 
corresponding diagonal torus.
We also find the character of the representation of the group $GL_2$
in the fiber of the  Procesi bundle, and its Calogero-Moser analogue,
over the $SL_2$-fixed point.
\end{abstract}

\maketitle

\section{Introduction}\label{sec:intro}

\subsection{} 


The natural action of the group $GL_2$ on $\C^2$ induces
a $GL_2$-action on  $\Hilb^n \C^2$,  the Hilbert scheme  of $n$ points
in the plane.
There is also a similar action of the group  $SL_2$ on $\cX_{\bc}$,
the Calogero-Moser space. The fixed points of the
corresponding maximal torus $\C^*\times \C^*$, resp. $\C^*$, of diagonal 
matrices, are labeled by partitions. Let $y_\la\in \Hilb^n \C^2$,
resp. $x_\la\in \cX_{\bc}$, denote the  point labeled
by a partition $\la$. 
It turns out that such a point is fixed by the  group 
$SL_2$ if and only if $\la = (m,m-1,\ds,2,1) =: \stair$ is a \textit{staircase}
partition. In the Hilbert scheme case, this has been observed by Kumar and Thomsen \cite{KumarHilbert}. The case of the  Calogero-Moser space can be
deduced from the Hilbert scheme case using ``hyper-K\"ahler rotation''.
A different, purely algebraic proof is given in section 3 below.

The theory of  rational Cherednik algebras gives an $SL_2 \times \s_n$-equivariant vector bundle $\mc{R}$ of rank $n!$ on the
Calogero-Moser space.
Thus,  $\mc{R}|_{x_{\stair}}$, the fiber of  $\mc{R}$  over the $SL_2$-fixed point,
acquires the structure of a $SL_2 \times \s_n$-representation.
We find  the character formula of this representation in terms  of
Kostka-Macdonald polynomials.
The vector bundle  $\mc{R}$ is an analogue  of  the Procesi bundle $\mc{P}$, a  $GL_2 \times \s_n$-equivariant vector bundle of
rank $n!$ on $\Hilb^n \C^2$. Our  formula agrees
with  the character of the representation of  $GL_2 \times \s_n$
in  $\mc{P}|_{y_{\stair}}$, the fiber of $\mc{P}$ over the $GL_2$-fixed point,
obtained by Haiman \cite{HaimanSurvey}. It is, in fact, possible to
derive our character  formula for  $\mc{R}|_{x_{\stair}}$
from the one for   $\mc{P}|_{y_{\stair}}$. However, 
 the character  formula for  $\mc{P}|_{y_{\stair}}$, as well as the construction
of the Procesi bundle itself,
involves the $n!$-theorem.

In section 2 we review some general results about $SL_2$-actions.
In section 3, we apply these results to show that, for any $\la$, the $SL_2$-orbit
of $x_\la$ is closed in  $\cX_{\bc}$.
The $GL_2$-orbit of $y_\la$ is not closed in  $\Hilb^n \C^2$, in general,
and we describe the closure in \S 4. 

\subsection{Acknowledgments}
The work of the first author was partially supported by the EPSRC grant EP/N005058/1. The work of the second author  was supported in part by the NSF grant DMS-1303462.

\section{$\slt$-actions}
Let $T \subset SL_2$ be the maximal torus of diagonal matrices.
The group $T$ acts on the Lie algebra $\slt$ by conjugation.
Let $(E,H,F)$ be the standard basis of $\slt$. 

Let $X$ be an algebraic variety equipped with
a $T$-action and let $\Vect(X)$ be the
the Lie algebra of algebraic vector fields on $X$.
The $T$-action on $X$ induces a $T$-action on $\Vect(X)$
by Lie algebra automorphisms. An algebraic variety $X$ equipped with
a $T$-action and with a Lie algebra homomorphism
$\slt \to \Vect(X)$ that intertwines the $T$-actions
on $\slt$ and on $\Vect(X)$, respectively,
will be referred to as an $\sltt$-variety.

Given a group $G$ and a $G$-variety $X$,
we write $X^G$ for the fixed point set of $G$.
Given an $\sltt$-variety $X$ we write
$X^{\slt}$ for the closed subset, with reduced scheme structure, of $X$, defined
as the zero locus of all vector fields contained in
the image of the map $\slt \to \Vect(X)$.
Clearly, we have $X^{\slt}\sset X^T$. Any variety with an $SL_2$-action has an obvious structure of an
$\sltt$-variety. In such a case we have $X^{SL_2}=X^{\slt}$.

\begin{thm}\label{thm:fixedA} Let $X$ be  smooth quasi-projective
  variety
equipped with an $\sltt$-action. Then,

\vi If  $x \in X^T$ is an isolated fixed point, then $x\in X^{\slt}$ 
 if and only if all the weights of $T$ on $T_x X$ are odd.  

\vii If the $\sltt$-action on $X$ comes from a
nontrivial $SL_2$-action with dense orbit then the set
$X^{SL_2}$ is finite.
\end{thm}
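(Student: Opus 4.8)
The plan is to prove the two parts separately, using throughout that a reductive group acting on a smooth variety has smooth fixed locus, with tangent space at a fixed point $x$ equal to the fixed subspace of $T_xX$.

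For (i), I would first record that since $x\in X^T$ is isolated and $X$ is smooth, $X^T$ is smooth of dimension $0$ at $x$, so $(T_xX)^T=0$; equivalently $T$ has no zero weight on $V:=T_xX$. Next, writing $\xi_E,\xi_H,\xi_F\in\Vect(X)$ for the images of $E,H,F$, I would use that the evaluation map $\Vect(X)\to T_xX,\ \xi\mapsto\xi(x)$, is $T$-equivariant because $x$ is $T$-fixed. Since $E$ and $F$ have $T$-weights $+2$ and $-2$ on $\slt$, the vectors $\xi_E(x)$ and $\xi_F(x)$ lie in the weight $+2$, resp. $-2$, subspaces of $V$, while $\xi_H(x)=0$ since $H$ generates the $T$-action. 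Hence $x\in X^{\slt}$ is equivalent to $\xi_E(x)=\xi_F(x)=0$. If all weights of $V$ are odd, the weight $\pm2$ spaces vanish and this is automatic, giving one direction. Conversely, if $x\in X^{\slt}$ then all three fields vanish at $x$, so their linearizations make $V$ an $\slt$-module compatible with the $T$-grading; as $V$ has no zero weight, and every irreducible $\slt$-module of even highest weight contains the zero weight, $V$ is a sum of odd-highest-weight irreducibles, all of whose $T$-weights are odd.

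For (ii), the reduction is that $X^{SL_2}=X^{\slt}$ is closed and, by smoothness of $X$ and reductivity of $SL_2$, smooth; it therefore suffices to show every fixed point $x$ is isolated, i.e. $(T_xX)^{SL_2}=0$, since a closed $0$-dimensional subvariety of a variety is finite. A dense $SL_2$-orbit makes $X$ irreducible and forces every rational $SL_2$-invariant to be constant, so $\C(X)^{SL_2}=\C$. The heart of the matter is local linearization at $x$: after passing (via Sumihiro) to an $SL_2$-stable affine neighbourhood, Luna's \'etale slice theorem yields a smooth $SL_2$-variety $S$ with $SL_2$-equivariant \'etale maps $S\to X$ and $S\to V$, where $V=T_xX$ carries the isotropy action. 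Since $S\to X$ is \'etale with dense $SL_2$-stable image, $S$ inherits a dense orbit and $\C(S)^{SL_2}=\C$. Now if $(T_xX)^{SL_2}\neq0$ then $(V^*)^{SL_2}\neq0$, and pulling a nonzero invariant linear functional $\lambda$ back along $S\to V$ gives an $SL_2$-invariant regular function on $S$ whose differential at the base point is an isomorphism composed with $d\lambda\neq 0$, hence nonconstant — contradicting $\C(S)^{SL_2}=\C$. Thus $(T_xX)^{SL_2}=0$ and $X^{SL_2}$ is finite.

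I expect the main obstacle to be making the local linearization rigorous: reducing the quasi-projective $X$ to an $SL_2$-stable affine chart, invoking the slice theorem correctly at a point whose stabilizer is all of $SL_2$, and then verifying that the dense-orbit property — and hence triviality of the invariant ring — really transports across the \'etale slice. By contrast, part (i) and the representation-theoretic input (a dense-orbit linear $SL_2$-module has no trivial summand, and a module with no zero weight has all weights odd) are routine once this geometric step is secured. As a fallback to Luna, one could instead use $\dim X=3-\dim\Stab\le 3$ and analyse the few dense homogeneous surfaces and threefolds for $SL_2$ directly.
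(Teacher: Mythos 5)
Your proof is correct, but it diverges from the paper's in an interesting way on part (ii). For part (i) you are doing essentially what the paper does, just dualized: the paper runs the argument on $\mf{m}/\mf{m}^2$, showing for the converse that $E(z)\in\mf{m}$ for $z\in\mf{m}$ by a degree count (the only problematic degree is $-2$, and weight-$(-2)$ elements lie in $\mf{m}^2$ by the oddness hypothesis), whereas you evaluate the vector fields at $x$ and observe that $\xi_E(x),\xi_F(x)$ land in the weight $\pm2$ subspaces of $T_xX$, which vanish; the forward direction (linearize the $\slt$-action on the tangent/cotangent space and note the zero weight space is trivial) is identical in both. Your version of the converse is arguably cleaner since it avoids the reduction to an affine $T$-stable chart. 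For part (ii) the paper simply cites Bialynicki-Birula, while you give a self-contained proof via $(T_xX)^{SL_2}=0$ at every fixed point; this is a genuine and worthwhile alternative, and your argument is sound, though two technical points deserve care. First, the existence of an $SL_2$-stable \emph{affine} neighbourhood of $x$ is not literally Sumihiro's statement; you get it by noting that Sumihiro gives an equivariant embedding into some $\mathbb{P}(V)$, and a fixed point of the semisimple group $SL_2$ corresponds to a trivial line in $V$, whose complementary hyperplane cuts out an $SL_2$-stable affine chart containing $x$. Second, you can bypass Luna (and the \'etale-transport worry you flag) entirely: on an $SL_2$-stable affine neighbourhood $U$, linear reductivity makes the surjection $\mf{m}_x\to\mf{m}_x/\mf{m}_x^2$ split equivariantly, so a nonzero invariant covector lifts directly to a nonconstant invariant regular function on $U$, contradicting the constancy of invariants forced by the dense orbit. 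With either repair your argument is complete and has the mild advantage of not outsourcing part (ii) to \cite{BBSL2}.
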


\begin{proof}
(i) Let $x \in X^{\slt}$ and let $\mf{m}$ be the maximal ideal in the
local ring $\mc{O}_{X,x}$ defining this point. Then $\mf{sl}_2$ acts on $\mf{m}/\mf{m}^2$. Since $x$ is a isolated fixed point for the $T$-action, the degree zero weight space is $0$ so all $\mf{sl}_2$-modules appearing in $\mf{m}/\mf{m}^2$ must have odd weight spaces only. 

Conversely, assume that all non-zero weight spaces in $\mf{m} / \mf{m}^2$ have odd weight. We need to show that $\mf{sl}_2$ acts in this case i.e. $
\mf{sl}_2(\mf{m}) \subset \mf{m}$.  It is known that any $T$-orbit is contained in an affine
$T$-stable Zariski open subset of $X$. Therefore, replacing
$\mc{O}_{X,x}$ by some affine $T$-stable neighborhood, we may assume
that $X$ is an affine $T$-variety with $\mf{sl}_2$-action and isolated
fixed point defined by $\mf{m} \lhd \C[X]$. Then $\C[X] = \C 1 \oplus
\mf{m}$ as a $T$-module. In particular, every homogeneous element of
non-zero degree belongs to $\mf{m}$. If $z \in \mf{m}$ is homogeneous of
degree $\neq -2$ then $\deg E(z) = \deg z + 2 \neq 0$. Thus, $E(z) \in
\mf{m}$. On the other hand, if $\deg z = -2$ then our assumptions imply
that $z \in \mf{m}^2$ and hence $E(z) \in \mf{m}$. A similar argument
applies for $F$.  

Part (ii) is a result of Bialynicki-Birula, \cite[Theorem 1]{BBSL2}.
\end{proof}

Let $N(T)$ be the normalizer of $T$ in $SL_2$. The Borel of upper-triangular matrices in $SL_2$ is denoted $B$. Its opposite is $B^-$.  

\begin{lem}\label{lem:onehomo}
Let $\mc{O}$ be a one-dimensional homogeneous $SL_2$-space. Then $\mc{O} \simeq SL_2 / B$. 
\end{lem}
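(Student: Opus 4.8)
The plan is to classify one-dimensional homogeneous $SL_2$-spaces by examining the stabilizer subgroup. Let $\mc{O}$ be such a space. Since $SL_2$ acts transitively, fix a point $p \in \mc{O}$ and let $P = \Stab_{SL_2}(p)$ be its stabilizer. Then $\mc{O} \simeq SL_2/P$ as $SL_2$-varieties, and the dimension count $\dim \mc{O} = \dim SL_2 - \dim P = 3 - \dim P = 1$ forces $\dim P = 2$. The whole problem thus reduces to identifying the closed subgroups of $SL_2$ of dimension $2$.

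First I would analyze the Lie algebra $\mf{p} = \Lie(P) \subset \slt$, which is a two-dimensional Lie subalgebra. Using the standard basis $(E,H,F)$ with the usual bracket relations $[H,E]=2E$, $[H,F]=-2F$, $[E,F]=H$, I would check that any two-dimensional Lie subalgebra of $\slt$ must be a Borel subalgebra, i.e., conjugate to $\C H \oplus \C E = \Lie(B)$. The key computation is that a two-dimensional subalgebra cannot contain any semisimple element acting with two distinct nonzero eigenvalues on the complementary line together with a genuinely three-dimensional span; concretely, a two-dimensional subalgebra of $\slt$ is necessarily solvable (since $\slt$ is simple and has no two-dimensional ideals), and every maximal solvable, hence every two-dimensional, subalgebra is a Borel. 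This identifies $\mf{p}$ as the Lie algebra of a Borel up to conjugacy.

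Next I would pass from the Lie algebra statement back to the group. Since $P$ is a closed subgroup with $\Lie(P) = \mf{p}$ conjugate to $\Lie(B)$, its identity component $P^\circ$ is conjugate to $B$. Because $B$ is its own normalizer in $SL_2$ and is connected and of finite index in any subgroup it is normal in, I would argue that $P$ itself must equal a conjugate of $B$: the normalizer of a Borel in $SL_2$ is the Borel itself, so no component group can arise, giving $P = gBg^{-1}$ for some $g \in SL_2$. Conjugating, we obtain the $SL_2$-equivariant isomorphism $\mc{O} \simeq SL_2/P \simeq SL_2/B$.

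The main obstacle I anticipate is the careful bookkeeping at the group level rather than the Lie algebra level: one must rule out the possibility that $P$ is disconnected with $P^\circ$ a Borel, and rule out other two-dimensional closed subgroups that might not be conjugate to $B$ (for instance ensuring there is no two-dimensional subgroup whose Lie algebra is abelian or otherwise non-Borel). In characteristic zero over $\C$, the correspondence between closed subgroups and their Lie algebras up to component groups is clean, and the self-normalizing property of $B$ closes the gap; alternatively one could invoke the Borel fixed point theorem, noting that $\mc{O} = SL_2/P$ is projective (a one-dimensional complete homogeneous space must be $\P^1$, so $P$ is parabolic and hence a Borel). I would favor whichever of these two routes — the direct Lie-algebra classification or the projectivity-plus-Borel-fixed-point argument — yields the shortest self-contained write-up.
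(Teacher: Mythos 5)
Your proposal is correct and follows essentially the same route as the paper: compute that the stabilizer has dimension $2$, note that a two-dimensional subalgebra of $\slt$ is solvable and hence conjugate to $\mf{b}$, and then use that $B$ is self-normalizing in $SL_2$ to conclude the stabilizer is exactly a conjugate of $B$. (Only a cosmetic quibble: the cleanest reason a two-dimensional Lie algebra is solvable is that its derived subalgebra is at most one-dimensional, not the simplicity of $\slt$.)
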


\begin{proof}
Let $K = \Stab_{SL_2}(x)$ for some $x \in \mc{O}$, a closed subgroup of $SL_2$. Let $\mf{k}$ be the Lie algebra of $K$. Since $\dim \mf{k} = 2$ it is a solvable subalgebra of $\slt$. Therefore it is conjugate to $\mf{b}$. Without loss of generality, $\mf{k} = \mf{b}$. This means that $K^{\circ} = B \subset K \subset N_{SL_2}(B) = B$. 
\end{proof}

\begin{lem}\label{lem:Tstab}
Let $\mc{O}$ be an $SL_2$-orbit in an affine variety $X$. Assume that
the stabilizer of $x \in \mc{O}$ contains $T$. Then $\mc{O}$ is closed
in $X$ and $\Stab_{SL_2}(x)$ is one of:\
 $T,N(T)$ or $SL_2$.  
\end{lem}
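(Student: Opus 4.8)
The plan is to first classify the closed subgroups $K := \Stab_{SL_2}(x)$ that contain $T$, and then to establish closedness separately. The recurring tool is the observation, immediate from Lemma \ref{lem:onehomo}, that an affine $SL_2$-variety contains no one-dimensional orbits: any such orbit is isomorphic to $SL_2/B \cong \P^1$, which is complete, so its image in $X$ is closed and hence affine --- impossible for $\P^1$.

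For the classification I would argue that since $T$ is connected it lies in $K^{\circ}$, so $\mf{k} := \Lie K^{\circ}$ is a subalgebra of $\slt$ containing $\t = \C H$. A direct bracket computation shows the only such subalgebras are $\t,\ \b,\ \b^-$ and $\slt$, so $K^{\circ} \in \{T, B, B^-, SL_2\}$. Since Borel subgroups are self-normalizing, $K^{\circ}=B$ (resp.\ $B^-$) forces $K=B$ (resp.\ $B^-$); the case $K^{\circ}=SL_2$ gives $K=SL_2$; and $K^{\circ}=T$ gives $K\subseteq N(T)$, i.e.\ $K\in\{T,N(T)\}$ as $N(T)/T\cong\Z/2$. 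The cases $K=B,B^-$ would make $\mc{O}\cong\P^1$ one-dimensional, which is excluded by the preliminary observation. Hence $K\in\{T,N(T),SL_2\}$.

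For closedness: if $K=SL_2$ then $\mc{O}=\{x\}$ is closed. Otherwise $\dim\mc{O}=2$. Since $\mc{O}$ is dense in $\overline{\mc{O}}$, every $SL_2$-invariant regular function on $\overline{\mc{O}}$ is constant, so the categorical quotient $\overline{\mc{O}}/\!\!/SL_2$ is a point and $\overline{\mc{O}}$ therefore contains a unique closed $SL_2$-orbit. By the preliminary observation the boundary $Z:=\overline{\mc{O}}\setminus\mc{O}$ contains no one-dimensional orbit, so if $\mc{O}$ is not closed this unique closed orbit must be an $SL_2$-fixed point $p$; as every fixed point is a closed orbit, $p$ is then the only fixed point, whence $Z=\{p\}$. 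Now $K$ is reductive, so $\mc{O}\cong SL_2/K$ is affine by Matsushima's criterion. Passing to the normalization $\nu\colon V\to\overline{\mc{O}}$ --- a finite morphism, an isomorphism over the smooth locus $\mc{O}$ --- produces a normal affine surface in which the affine open $V_0:=\nu^{-1}(\mc{O})\cong\mc{O}$ has complement $\nu^{-1}(\{p\})$ of codimension two. Algebraic Hartogs on $V$ then gives $\C[V_0]=\C[V]$, forcing $V_0=V$ and hence $Z=\emptyset$, a contradiction. Thus $\mc{O}$ is closed.

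The main obstacle is excluding a boundary $SL_2$-fixed point. Completeness (via Lemma \ref{lem:onehomo}) cleanly removes one-dimensional boundary orbits, and the density of $\mc{O}$ pins the categorical quotient down to a point; the remaining work is to rule out the single possible fixed point, for which the hypothesis $T\subseteq K$ is essential --- a general $SL_2$-orbit may well acquire a boundary fixed point, as the minimal nilpotent orbit in $\slt$ under the adjoint action (whose stabilizer does not contain $T$) shows. I expect the normalization/codimension argument above to be the cleanest route; alternatively one could attempt the Hilbert--Mumford criterion directly, using that every nontrivial one-parameter subgroup is conjugate to $t\mapsto\mathrm{diag}(t,t^{-1})\in T$ and that $x$ is $T$-fixed.
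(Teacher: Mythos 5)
Your proof is correct and follows essentially the same route as the paper's: classify the subalgebras $\mf{k}\supseteq\t$, rule out Borel stabilizers and one-dimensional boundary orbits because $\P^1$ cannot embed in an affine variety, use the uniqueness of the closed orbit in the affine $\overline{\mc{O}}$ to reduce the boundary to a single fixed point, and derive a contradiction via normalization and algebraic Hartogs. Your write-up is in fact slightly more complete than the paper's, which leaves Matsushima's criterion and the explicit exclusion of $K=B$ implicit.
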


\begin{proof}
Let $K = \Stab_{SL_2}(x)$ and $\mf{k} = \Lie K$. Then $\mf{t} \subset \mf{k}$ implies that either $\mf{k} = \slt$ (and hence $\mc{O} = \{ x \}$), $\mf{k} \simeq \mf{b}$ or $\mf{k} = \mf{t}$. We assume that $\mf{k} \neq \slt$.

Let $Y:= \overline{\mc{O}} \sminus
\mc{O}$ and assume $Y$  is nonempty. Then $Y$ is affine and $\dim Y <
\dim \mc{O} =2$, i.e. $\dim Y \leq 1$. Assume first $\dim Y=1$. Since $ \overline{\mc{O}}$ is affine, it has a
unique closed orbit $\mc{O}'$. Therefore $Y$ also has a unique closed
orbit $\mc{O}'$ and all other orbits in $Y$ have dimension greater than
$\dim \mc{O}'$. In particular, $Y$ contains at most $1$ orbit of
dimension zero. Therefore, $Y$ must contain an orbit of dimension
one. The stabilizer $K$ of such an orbit has dimension $2$ and, without
loss of generality, contains $T$. This implies that $\mathrm{Lie} \ K$
is a Borel subalgebra of $\mf{sl}_2$. Hence $K^{\circ}$ is a Borel
subgroup of $SL_2$. Since $K \sset N_{SL_2}(K^{\circ}) = K^{\circ}$, 
it follows that $K=K^{\circ}$ is a Borel subgroup.
This is impossible since $SL_2/K$  does not embedded into an affine variety. Thus, the only remaining possibility is that $\dim Y=0$. Let $Z$  be the normalization of $\overline{\mc{O}}$. Thus, we have an open dense embedding $\mc{O} \into Z$ such that $\dim (Z \setminus \mc{O})=0$. Since both $\mc{O}$ and $Z$ are affine, this is impossible: it would imply that any regular function on $\mc{O}$ extends to $Z$, so the the restriction  $\C[Z] \rightarrow \C[\mc{O}]$ is an isomorphism, hence $\mc{O}=Z$.
\end{proof}

\begin{lem}\label{lem:sltcomplete}
Let $X$ be a complete $SL_2$-variety and $\mc{O}$ an orbit such that the stabilizer of $x \in \mc{O}$ equals $T$, resp. $N(T)$. 
\begin{enumerate}
\item There is a finite (surjective) equivariant morphism $\mathbb{P}^1 \times \mathbb{P}^1 \twoheadrightarrow \overline{\mc{O}}$, resp. $\mathbb{P}^2 \twoheadrightarrow \overline{\mc{O}}$, which is the identity on $\mc{O}$. 
\item This morphism is an isomorphism if and only if $\overline{\mc{O}}$ is normal. 
\item In all cases $\overline{\mc{O}} \smallsetminus \mc{O} \simeq \mathbb{P}^1$ and $\overline{\mc{O}}^{SL_2} = \emptyset$.    
\end{enumerate}
\end{lem}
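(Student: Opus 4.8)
The plan is to build, uniformly in both cases, the standard smooth equivariant completion of the homogeneous space $\mc{O}$ and then compare it with $\overline{\mc{O}}$ by extending the tautological rational map. First I would identify $\mc{O}$: since $\Stab_{SL_2}(x) = T$ (resp.\ $N(T)$), we have $\mc{O} \simeq SL_2/T$ (resp.\ $SL_2/N(T)$). Now $SL_2/T$ is the complement of the diagonal $\Delta$ in $M := \mathbb{P}^1 \times \mathbb{P}^1$ (diagonal action), while $SL_2/N(T)$ is the complement of the conic $C$ of nilpotent directions in $M := \mathbb{P}^2 = \mathbb{P}(\slt)$ (adjoint action). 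In each case $M$ is smooth and complete, and the boundary $M \sminus \mc{O}$ is a single closed orbit which, being one-dimensional and homogeneous, is isomorphic to $SL_2/B \simeq \mathbb{P}^1$ by Lemma \ref{lem:onehomo}. I also record that $\mc{O}$ is affine of dimension $2$, since $T$ and $N(T)$ are reductive (Matsushima's criterion). These identifications give an equivariant open embedding $\mc{O} \into M$, and the goal is to extend the inclusion $\mc{O} \into \overline{\mc{O}}$ to a finite equivariant surjection $M \onto \overline{\mc{O}}$.

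Next I would extend the rational map. The two open embeddings furnish a rational map $f \colon M \dashrightarrow \overline{\mc{O}}$ agreeing with the inclusion on $\mc{O}$. As $M$ is a smooth surface and $\overline{\mc{O}}$ is proper, the valuative criterion of properness shows $f$ is defined at every codimension-one point, so its indeterminacy locus is finite. Being equivariant on the dense orbit, this locus is $SL_2$-stable and contained in the single orbit $M \sminus \mc{O} \simeq SL_2/B$; a finite $SL_2$-stable subset of a positive-dimensional orbit is empty, so $f$ extends to an equivariant morphism $f \colon M \to \overline{\mc{O}}$. Since $M$ is complete, $f(M)$ is closed and contains the dense set $\mc{O}$, so $f$ is surjective; and $f$ is birational, being the identity on $\mc{O}$.

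The main step is finiteness, where the crux is to rule out contraction of the boundary. I claim $\dim(\overline{\mc{O}} \sminus \mc{O}) = 1$. Passing to the normalization $\nu \colon Z \to \overline{\mc{O}}$ (over which the smooth orbit $\mc{O}$ lifts isomorphically), were $\overline{\mc{O}} \sminus \mc{O}$ finite, then $\mc{O}$ would be the complement of a finite (codimension-two) set in the normal complete surface $Z$, and algebraic Hartogs would force $\C[\mc{O}] = \C[Z] = \C$, contradicting that $\mc{O}$ is affine of dimension $2$. Equivariance and a dimension count give $f^{-1}(\mc{O}) = \mc{O}$, whence $f(M \sminus \mc{O}) = \overline{\mc{O}} \sminus \mc{O}$ is one-dimensional; in particular $f$ does not contract the boundary $\mathbb{P}^1$. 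Being the image of a single orbit, $\overline{\mc{O}} \sminus \mc{O}$ is itself a single one-dimensional orbit $\simeq SL_2/B \simeq \mathbb{P}^1$, and the induced equivariant surjection $SL_2/B \to SL_2/B$ is an isomorphism. Thus every fiber of $f$ is a singleton, so $f$, being proper and quasi-finite, is finite; this proves (1). The same analysis yields (3): $\overline{\mc{O}} \sminus \mc{O} \simeq \mathbb{P}^1$, and $\overline{\mc{O}}^{SL_2} = \emptyset$ because neither $\mc{O}$ nor $SL_2/B$ contains an $SL_2$-fixed point.

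Finally, for (2): if $\overline{\mc{O}}$ is normal, then the finite birational morphism $f$ onto a normal variety is an isomorphism (the coherent algebra $f_*\mc{O}_M$ is integral over, hence equal to, $\mc{O}_{\overline{\mc{O}}}$); conversely, if $f$ is an isomorphism then $\overline{\mc{O}} \simeq M$ is smooth, hence normal. The subtle points I anticipate are the extension of $f$ across the codimension-one boundary, handled by smoothness of $M$ together with the valuative criterion, and above all the non-contraction of the boundary, which I reduce to the affineness of $\mc{O}$ via the Hartogs argument; a self-intersection/negativity argument would also work but demands more care, as $\overline{\mc{O}}$ need not be projective.
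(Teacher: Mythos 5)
Your proof is correct, but it takes a genuinely different route from the paper's. The paper identifies $\mc{O}$ as affine via Matsushima, invokes \cite[Corollaire 21.12.7]{EGA4part4} to see that the boundary $\overline{\mc{O}}\sminus\mc{O}$ has pure codimension one, combines Theorem \ref{thm:fixedA}(ii) and Lemma \ref{lem:onehomo} to show each boundary component is a closed orbit $\simeq SL_2/B$, and then outsources the rest to Mabuchi: his Theorem 5.1 identifies $\mathbb{P}^1\times\mathbb{P}^1$, resp.\ $\mathbb{P}^2$, as \emph{the unique normal} $SL_2$-completion of $\mc{O}$, and the ``Observation'' in his Section 0 supplies the finite comparison morphism. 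You instead construct that morphism by hand: exhibit the standard smooth completion $M$, extend the tautological birational map $M\dashrightarrow\overline{\mc{O}}$ across the codimension-one boundary via smoothness of $M$ plus the valuative criterion, kill the residual finite indeterminacy locus by equivariance, and rule out contraction of the boundary $\mathbb{P}^1$ by the normalization-plus-Hartogs argument (which, pleasingly, is the same device the paper uses in the proof of Lemma \ref{lem:Tstab}, transplanted from the affine to the complete setting, where it replaces the EGA citation on purity of the boundary). The trade-off: the paper's appeal to Mabuchi is shorter and also yields uniqueness of the normal completion for free, while your argument is self-contained, elementary, and makes the finiteness and the identification of the boundary orbit transparent rather than quoted. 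Two small points you state tersely but which do hold: the indeterminacy locus is $SL_2$-stable because the domain of definition of a rational map is canonical and $f$ agrees with its $g$-conjugates on the dense orbit; and $f^{-1}(\mc{O})=\mc{O}$ because otherwise the boundary orbit $SL_2/B$ would map to an orbit of dimension at most one inside the two-dimensional homogeneous space $\mc{O}$, which is impossible.
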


\begin{proof}
We explain how the lemma can be deduced from the results of \cite{Mabuchi}. 

Matsuchima's Theorem implies that $\mc{O}$ is
affine. Therefore, by \cite[Corollaire 21.12.7]{EGA4part4}, the complement $Y =  \overline{\mc{O}} \setminus
\mc{O}$ has pure codimension one. By Theorem \ref{thm:fixedA} (ii),
there are only finitely many zero-dimensional orbits in $Y$. Therefore
Lemma \ref{lem:onehomo} implies that each irreducible component $Y_i$ of
$Y$ (being one-dimensional) must contain an orbit $\simeq SL_2 /
B$. Since this orbit is complete, it is closed in $Y_i$ i.e. $Y_i \simeq
SL_2 / B$. Moreover, this implies that $Y_i \cap Y_{j} = \emptyset$ for
$i \neq j$ and hence $\overline{\mc{O}}^{SL_2}  = Y^{SL_2} = \emptyset$. 

By \cite[Theorem 5.1]{Mabuchi}, $\mathbb{P}^1 \times \mathbb{P}^1$ (in the case $\mc{O} \simeq SL_2 / T$), resp. $\mathbb{P}^2$ (in the case $\mc{O} \simeq SL_2 / N(T)$),  is the the unique normal completion of $\mc{O}$. In both these cases the complement is a single copy of $\mathbb{P}^1$. Since we have shown that the boundary $Y$ is a finite union of codimension one orbits, the lemma follows from the Observation of \cite[Section 0]{Mabuchi}. 

\end{proof}

\section{Calogero-Moser spaces} 

Let $(W,\h)$ be a finite Coxeter group, with $S$ the set of \textit{all} reflections in $W$ and $\bc : S \rightarrow \C$ a conjugate invariant function. For each $s \in S$, we fix eigenvectors $\alpha_s \in \h^*$ and $\alpha_s^{\vee} \in \h$ with eigenvalue $-1$. Associated to this data is the rational Cherednik algebra
$\H_{\bc}(W)$ at $t = 0$. It is the quotient of the skew group ring $T^* (\h \oplus \h^*) \rtimes W$ by the relations 
$$
[y,x] = -\sum_{s \in S} \bc(s) \frac{\alpha_s(y) x(\alpha_s^{\vee})}{\alpha_s(\alpha_s^{\vee})}, \quad \forall \ x \in \h^*, y \in \h
$$
and $[x,x'] = [y,y'] = 0$ for $x,x' \in \h^*$ and $y,y' \in \h$. We choose  a $W$-invariant inner product $( - , -)$ on $\h$. The form defines an $W$-isomorphism $\h^* \iso \h$, $x \mapsto \check{x}$.

\subsection{}\label{sec:Hcentre} The center $Z(\H_{\bc}(W))$ of $\H_{\bc}(W)$ has a natural Poisson structure, making $\H_{\bc}(W)$ into a Poisson module. Let $x_1, \ds, x_n$ be a basis of $\h^*$ and $y_1, \ds, y_n$ dual basis. Then the elements
\begin{equation}\label{eq:sltH}
E = -\frac{1}{2}\sum_i x_i^2, \quad F = \frac{1}{2}\sum_i y_i^2, \quad H = \frac{1}{2} \sum_{i} x_i y_i + y_i x_i. 
\end{equation}
are central and form an $\slt$-triple under the Poisson bracket. Their action on $\H_{\bc}(W)$ is given by $[E,x] = [F,\check{x}] = 0$, $[E,\check{x}] = x$, $[F,x] = \check{x}$ and $[H,x] = x$, $[H,\check{x}] = - \check{x}$. Their action on $\H_{\bc}(W)$ is locally finite. Therefore this action can be integrated to get a locally finite action of $SL_2(\C)$  on $\H_{\bc}(W)$ by algebra automorphisms.  Explicitly, this action is given on generators by 
$$
\left( \begin{array}{cc}
a & b \\
c & d 
\end{array} \right) \cdot x = a x + c \check{x}, \quad  
\left( \begin{array}{cc}
a & b \\
c & d 
\end{array} \right) \cdot \check{x} = b x + d \check{x},  \quad  
\left( \begin{array}{cc}
a & b \\
c & d 
\end{array} \right) \cdot w = w, \quad \forall x \in \h^*, \ w \in W. 
$$
The Calogero-Moser space $\cX_{\bc}(W)$ is an affine variety
defined as $\Spec Z(\H_{\bc}(W))$. The action of $SL_2(\C)$ restricts to $Z(\H_{\bc}(W))$
and induces a Hamiltonian action on $\cX_{\bc}(W)$, such that its differential is the action of $\mf{sl}_2$ given by the vector fields $\{ E, - \}$, $\{ F, - \}$ and $\{ H, - \}$. 

There are only finitely many $T$-fixed points on $\cX_{\bc}(W)$. When the Calogero-Moser space is smooth, the $T$-fixed points are naturally labeled $x_{\lambda}$, with $\lambda \in \Irr (W)$. These fixed points are uniquely specified by the fact that the simple head $L(\lambda)$ of the baby Verma module $\Delta(\lambda)$ is supported at $x_{\lambda}$; see \cite{Baby} for details.

Consider the element $w_0 = \left( \begin{array}{cc}
0 & 1 \\
-1 & 0 
\end{array} \right)$ in $SL_2$. It normalizes $T$. 

\begin{lem}\label{lem:sgntwist}
Assume that $\cX_{\bc}(W)$  is smooth. Let $x_{\lambda} \in \cX_{\bc}(W)$ be the $T$-fixed point labeled by the representation $\lambda \in \Irr (W)$. Then $w_0 \cdot x_{\lambda}$ is the fixed point labeled by $\lambda \o \sgn$, where $\sgn$ is the sign representation. 
\end{lem}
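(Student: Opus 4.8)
The plan is to convert the statement about $T$-fixed points into one about the simple modules $L(\lambda)$ by twisting with the algebra automorphism $w_0$ and tracking supports. For any $g \in SL_2$ the action on $\H_{\bc}(W)$ by algebra automorphisms lets one twist an $\H_{\bc}(W)$-module $M$ into a module ${}^gM$ with action $a \cdot m := g^{-1}(a)m$; since $g$ preserves the centre $Z(\H_{\bc}(W))$ and induces the geometric $SL_2$-action on $\cX_{\bc}(W) = \Spec Z(\H_{\bc}(W))$, the support is transported accordingly, $\Supp({}^gM) = g \cdot \Supp(M)$ (for the standard convention). Because $w_0$ normalizes $T$, it permutes the finite set of $T$-fixed points, so $w_0 \cdot x_\lambda = x_\mu$ for a unique $\mu \in \Irr(W)$; moreover ${}^{w_0}L(\lambda)$ is again a simple module supported at the single point $w_0 \cdot x_\lambda$, so ${}^{w_0}L(\lambda) \cong L(\mu)$. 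Everything thus reduces to identifying $\mu$, and I claim $\mu = \lambda \otimes \sgn$.

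First I would record the graded structure of the two sides. Since $\cX_{\bc}(W)$ is smooth, $x_\lambda$ is an Azumaya point, so $L(\lambda)$ is finite-dimensional; being the head of $\Delta(\lambda) = \C[\h] \otimes \lambda$, its lowest $H$-weight space is the degree-zero piece, isomorphic to $\lambda$ as a $W$-module. The automorphism $w_0$ fixes $\C W \subset \H_{\bc}(W)$ pointwise and conjugates $T$ to $T^{-1}$ (equivalently it sends the Euler element $H$ to $-H$), so twisting by $w_0$ preserves the $W$-module structure of each homogeneous component while negating its $H$-weight. Hence the lowest $H$-weight space of ${}^{w_0}L(\lambda) \cong L(\mu)$ is, as a $W$-module, the \emph{top} (highest $H$-weight) space of $L(\lambda)$. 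On the other hand the lowest weight space of $L(\mu)$ is $\cong \mu$. Therefore $\mu$ is exactly the $W$-type of the top of $L(\lambda)$, and it remains to prove that this top is $\lambda \otimes \sgn$.

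The computation of the top of $L(\lambda)$ is the \textbf{heart of the argument}. Here I would invoke the graded contravariant duality on finite-dimensional $\H_{\bc}(W)$-modules: the graded dual $L(\lambda)^{\vee}$, with $(L(\lambda)^{\vee})_{-d} = (L(\lambda)_d)^*$, is again simple, and one has $L(\lambda)^{\vee} \cong L(\lambda \otimes \sgn)$. Equivalently, $L(\lambda)$ satisfies a graded ``Poincar\'e duality with a sign twist'', $L(\lambda)_{N-d} \cong (L(\lambda)_d)^* \otimes \sgn$, where $N = |S|$ is the top degree; the twist appears because the determinant character $\det\nolimits_{\h}$ of the reflection group equals $\sgn$, and because every representation of the Coxeter group $W$ is self-dual. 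Taking $d = 0$ gives $\mathrm{top} = L(\lambda)_N \cong \lambda^* \otimes \sgn \cong \lambda \otimes \sgn$, whence $\mu = \lambda \otimes \sgn$ and $w_0 \cdot x_\lambda = x_{\lambda \otimes \sgn}$. The main obstacle is precisely pinning down this sign twist: one must verify that the contravariant form identifies the top of $L(\lambda)$ with $\lambda \otimes \sgn$ rather than with $\lambda$. The smoothness hypothesis is what guarantees that $L(\lambda)$ is finite-dimensional, so that its two extreme $H$-weight spaces exist and are single isotypic $W$-pieces, allowing the duality to determine $\mu$ unambiguously.
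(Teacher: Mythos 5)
Your proof takes essentially the same route as the paper: both arguments twist $L(\lambda)$ by the Fourier transform $w_0$, track supports, and reduce everything to the identification ${}^{w_0}L(\lambda)\simeq L(\lambda\otimes\sgn)$ (equivalently, that the top graded piece of $L(\lambda)$ is $\lambda\otimes\sgn$), which the paper simply cites as ``a standard result'' and which you likewise defer to the sign-twisted graded duality rather than prove. One harmless slip: the top degree $N$ of $L(\lambda)$ is not $|S|$ in general (for $\lambda=(2,1)$ and $W=\s_3$ the top degree is $2$ while $|S|=3$); since your argument only uses the duality between the degree-$0$ and degree-$N$ pieces for whatever $N$ is, the conclusion is unaffected.
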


\begin{proof}
 The automorphism of $\H_{\bc}(W)$ defined by $w_0$ is the Fourier transform, of  order $4$. The fixed point $w_0 \cdot x$ is the support of ${}^{w_0} L(\lambda)$. Thus, it suffices to show that ${}^{w_0} L(\lambda) \simeq L(\lambda \otimes \sgn)$. This is a standard result. 
\end{proof}

\begin{defn}
A $(\H_{\bc},\mf{sl}_2)$-module $M$ is both a left $\H_{\bc}(W)$-module and left $\slt$-module such that the morphism $\H_{\bc}(W) \o M \rightarrow M$ is a morphism of $\slt$-modules. 
\end{defn}

Every finite dimensional $(\H_{\bc}(W),\mf{sl}_2)$-module is set-theoretically supported at a $SL_2$-fixed point. However, not every finite dimensional $\H_{\bc}(W)$-module set-theoretically supported at a $SL_2$-fixed point has a compatible $\mf{sl}_2$-action. 

Let $e$ denote the trivial idempotent in $\C W$. Then $e$ is $SL_2$-invariant and hence $\H_{\bc}(W) e$ is a $(\H_{\bc},\mf{sl}_2)$-module. Thinking of $\H_{\bc}(W) e$ as a finitely generated $Z(\H_{\bc}(W))$-module, we get a $SL_2 \times W$-equivariant coherent sheaf $\mc{R}$ on $\cX_{\bc}(W)$. When the latter space is smooth, $\mc{R}$ is a vector bundle of rank $|W|$. 

\subsection{Type $A$} Let $\H_{\bc}$ be the rational Cherednik algebra
for the symmetric group $\s_n$ at $t = 0$ and $\bc \neq 0$. In this case both
the set of  $T$-fixed points in the CM-space
$\cX_{\bc} := \cX_{\bc}(\s_n)$
 and the set of 
(isomorphism classes of) simple
irreducible representations of $\s_n$  are labeled by partitions of $n$. We
write  $\mf{m}_{\lambda}$ for the maximal ideal of the  $T$-fixed point corresponding to a partition $\la$.

\begin{notation} From now on, the staircase partition $(m,m-1,\ds,
1)$ will be denoted $\stair$.  Given a partition $\lambda$, the corresponding representation of the symmetric group will be denoted $\pi_{\lambda}$. The finite dimensional, irreducible $SL_2$-module with highest weight $m \ge 0$ will be denoted $V(m)$. 
\end{notation}

\begin{equation}\label{eq:Young}
\Yboxdim18pt
\young(7531,531:,31::,1:::)
\end{equation}
Let $x$ be a box of the partition $\lambda$. The \textit{hook length} $h(x)$ of $x$ is the number boxes strictly to the right of $x$ plus the number strictly below plus one. In the above staircase partition the entry of the box is the corresponding hook length. The \textit{hook polynomial} of $\lambda$ is defined to be 
$$
H_{\lambda}(q) = \prod_{x \in \lambda} (1 - q^{h(x)}).
$$  
Let $(q)_n = \prod_{i = 1}^n (1 - q^i)$ and denote by $n(\lambda)$ the
partition statistic $\sum_{i \ge 1} (i-1) \lambda_i$. 

We write $\chi_T$ for the character of a finite dimensional $T$-representation.
 
\begin{lem}\label{lem:Atang}
Let $x_{\lambda}$ be the $T$-fixed point of $\cX_{\bc}$ labeled by the partition $\lambda$. Then 
$$
\chi_{T}(T_{x_{\lambda}} \cX_{\bc}) = \sum_{x \in \lambda}
q^{h(x)} + q^{-h(x)}. 
$$
\end{lem}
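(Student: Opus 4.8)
The plan is to work in the Etingof--Ginzburg matrix model of the type $A$ space, which turns the statement into a tangent-space computation on an explicit affine GIT quotient. For $\bc\neq 0$ one has
\[
\cX_{\bc} \;\simeq\; \{(X,Y,i,j)\in \End(V)\oplus\End(V)\oplus V\oplus V^* : [X,Y]+ij = \Id_V\}/\!\!/ GL(V),\qquad V=\C^n,
\]
where $GL(V)$ acts freely and the quotient is smooth and symplectic of dimension $2n$. Using the explicit $SL_2$-action recorded before \eqref{eq:sltH}, the diagonal torus $T\subset SL_2$ acts by $t\cdot(X,Y,i,j)=(tX,\,t^{-1}Y,\,i,\,j)$: the weights $+1$ on $X$ and $-1$ on $Y$ agree with the $T$-weights of $x$ and $\check x$, while $i,j$ must carry weight $0$ because the moment map $[X,Y]+ij$ is $SL_2$-invariant.

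First I would pin down the fixed point. A $T$-fixed point is recorded by a cocharacter $\rho\colon T\to GL(V)$ making $X$ and $Y$ equivariant of weights $+1$ and $-1$; this forces a grading $V=\bigoplus_k V_k$ on which $X$ raises and $Y$ lowers degree by one, and cyclicity of $i$ identifies a basis of $V$ with the boxes of a partition, the box in row $r$ and column $s$ sitting in degree equal to its content $c=s-r$ (normalised so the corner has content $0$). Hence the $T$-module $V$ has character $[V]=\sum_{x\in\lambda}q^{c(x)}$ and $[V^*]=\sum_{x\in\lambda}q^{-c(x)}$. The partition labelling produced this way agrees with the $L(\lambda)$-labelling of \S\ref{sec:Hcentre} at most up to transposition; since transposition preserves the multiset of hook lengths and the target formula is symmetric under $q\mapsto q^{-1}$, this ambiguity does not affect the outcome.

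Next I would read the tangent character off the deformation (Kuranishi/GIT) complex. At the smooth stable point $x_\lambda$ the tangent space is the middle cohomology of
\[
\End(V)\xrightarrow{\ \act\ } \End(V)\oplus\End(V)\oplus V\oplus V^*\xrightarrow{\ d\mu\ }\End(V),
\]
where the outer copies of $\End(V)$ sit in weight $0$, the inner copies contribute $q^{+1}[\,\End V\,]$ and $q^{-1}[\,\End V\,]$ (for the $\delta X$ and $\delta Y$ directions), and $\delta i,\delta j$ contribute $[V]$ and $[V^*]$. Freeness of the action and smoothness of $x_\lambda$ make the first map injective and the second surjective, so the character is the alternating sum
\[
\chi_T(T_{x_\lambda}\cX_{\bc}) = (q+q^{-1}-2)\,[V]\,[V^*] + [V] + [V^*].
\]

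Finally it remains to prove the hook--content identity
\[
(q+q^{-1}-2)\Big(\sum_{x\in\lambda}q^{c(x)}\Big)\Big(\sum_{y\in\lambda}q^{-c(y)}\Big) + \sum_{x\in\lambda}\big(q^{c(x)}+q^{-c(x)}\big) = \sum_{x\in\lambda}\big(q^{h(x)}+q^{-h(x)}\big).
\]
This is exactly the specialisation at $(t_1,t_2)=(q,q^{-1})$ of the classical tangent-weight formula for $\Hilb^n\C^2$ at the monomial ideal $I_\lambda$, namely $\sum_{x}\big(t_1^{l(x)+1}t_2^{-a(x)}+t_1^{-l(x)}t_2^{a(x)+1}\big)$, combined with $a(x)+l(x)+1=h(x)$; I would either cite Ellingsrud--Str{\"o}mme/Nakajima or prove it directly by the standard arm--leg telescoping within each row and column. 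I expect this last combinatorial identity to be the only genuine obstacle: the cancellation in $(q+q^{-1}-2)\,[V][V^*]$ that collapses the double content sum into single hook terms is precisely the arm--leg bookkeeping, and getting its signs and boundary terms exactly right is where the work lies.
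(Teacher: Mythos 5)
Your route is genuinely different from the paper's. The paper works entirely on the Cherednik side: it cites \cite{BellEnd} (Theorem 4.1 and Corollary 4.4) for the statement that $\C\bigl[(T_{x_\lambda}\cX_{\bc})^+\bigr]$ has graded character $1/H_\lambda(q)=\prod_{x\in\lambda}(1-q^{h(x)})^{-1}$ --- which immediately exhibits the positive tangent weights as the hook lengths --- and then doubles up using $T$-invariance of the symplectic form. You instead pass to the Etingof--Ginzburg matrix model, read the tangent character off the ADHM-type deformation complex, and reduce to the specialization $(t_1,t_2)=(q,q^{-1})$ of the Ellingsrud--Str{\"o}mme arm--leg formula. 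The alternating-sum computation $(q+q^{-1}-2)[V][V^*]+[V]+[V^*]$ is correct (it checks out for $n=1,2$), the final hook--content identity is indeed the stated specialization, and this buys you independence from \cite{BellEnd} at the cost of importing Wilson's classification of $T$-fixed points of $\cX_{\bc}$ in the matrix model (the claim that $V$ acquires a grading whose graded dimensions are the diagonal profile of a partition, with basis vectors in degree equal to content, is a theorem, not an observation, and should be cited).

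There is, however, one genuine gap: the matching of parametrizations. The lemma refers to the fixed point $x_\lambda$ labeled via the support of $L(\lambda)$ (\S\ref{sec:Hcentre}), whereas your computation produces, for the fixed point carrying the matrix-model label $\mu$, the character $\sum_{x\in\mu}(q^{h(x)}+q^{-h(x)})$. You assert that the two labelings ``agree at most up to transposition'' and conclude via transpose-invariance of hooks, but you give no argument for that assertion, and it cannot be waved away: a priori the two bijections between the fixed-point set and partitions of $n$ could differ by an arbitrary permutation, and since the hook polynomial separates partitions, a mismatch $\mu\notin\{\lambda,\lambda^t\}$ would falsify the stated formula at $x_\lambda$. (The $w_0$-symmetry of Lemma \ref{lem:sgntwist} only shows the discrepancy intertwines transposition; it does not pin it down. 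Note also that the paper's own matching of fixed points across the family $\mf{X}\to\mathbb{A}^1$ at the end of \S\ref{sec:Hilbert} is \emph{deduced from} Lemma \ref{lem:Atang}, so it cannot be invoked here without circularity.) To close the gap you would need to compare the two labelings directly, e.g.\ by tracking the tautological bundle $V$ through the Etingof--Ginzburg isomorphism \cite{EG} and identifying its fiber at $x_\lambda$ as an explicitly known graded piece of $L(\lambda)$, or by citing a reference where this comparison is carried out. The paper's proof avoids the issue entirely by never leaving the $L(\lambda)$ parametrization.
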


\begin{proof} 
It is known that the graded multiplicity of $\pi_{\lambda}$ in the coinvariant ring $\C[\h] / \langle \C[\h]_+^W \rangle$ is given by $(q)_{(n)} q^{ n(\lambda)} H_{\lambda}(q)^{-1}$,
the so called ``fake polynomial''. If we decompose $T_y \cX_{\bc} = (T_y \cX_{\bc})^+ \oplus (T_y \cX_{\bc})^-$ into its positive and negative weight parts, then Theorem 4.1 and Corollary 4.4 of \cite{BellEnd} imply that 
$$
\chi_{T}\left((T_y \cX_{\bc})^+\right) = \sum_{x \in \lambda} q^{ h(x)},
\quad \textrm{ since } \quad \chi_{T} \left( \C \left[ (T_y \cX_{\bc})^+\right] \right) = \frac{1}{H_{\lambda}(q)}.
$$  
The fact that $T$ preserves the symplectic form on $\cX_{\bc}$ implies that $\chi_{T}\left((T_y \cX_{\bc})^-\right) = \sum_{x \in \lambda} q^{-h(x)}$. 
\end{proof}

The following observation is elementary. 

\begin{lem}\label{lem:fixedA}
Let $\lambda$ be a partition such that every hook length in $\lambda$ is odd. Then $\lambda$ is a staircase partition. 
\end{lem}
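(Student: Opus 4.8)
The plan is to translate the ``all hook lengths odd'' hypothesis into a simple parity congruence and then extract two structural constraints from it. Write $\lambda'$ for the conjugate partition, $\ell$ for the number of parts of $\lambda$, and recall that the hook length of the box in row $i$ and column $j$ is $h(i,j) = (\lambda_i - j) + (\lambda'_j - i) + 1$. First I would observe the parity identity $h(i,j) \equiv \lambda_i + i + \lambda'_j + j + 1 \pmod 2$, so that $h(i,j)$ is odd if and only if $\lambda_i + i \equiv \lambda'_j + j \pmod 2$. Thus the hypothesis is equivalent to the statement that $\lambda_i + i \equiv \lambda'_j + j \pmod 2$ holds for \emph{every} box $(i,j)$ of $\lambda$.

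Next I would specialize this congruence along the first column and the first row. Taking $j = 1$ (every row contains a box in column $1$, and $\lambda'_1 = \ell$) gives $\lambda_i + i \equiv \ell + 1 \pmod 2$ for all $i$, so the quantities $\lambda_i + i$ all share a common parity; equivalently every consecutive difference $\lambda_i - \lambda_{i+1}$ is odd, and in particular the parts of $\lambda$ are strictly decreasing. Symmetrically, taking $i = 1$ shows that the quantities $\lambda'_j + j$ all share a common parity, so every consecutive difference $\lambda'_j - \lambda'_{j+1}$ of column heights is odd.

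Finally I would upgrade ``odd differences'' to ``differences equal to $1$''. If some row difference satisfied $\lambda_k - \lambda_{k+1} \ge 2$, then every column $j$ in the range $\lambda_{k+1} < j \le \lambda_k$ would have height exactly $\lambda'_j = k$, producing two consecutive columns of equal height and hence an even (indeed zero) column difference, contradicting the second constraint; so $\lambda_k - \lambda_{k+1} = 1$ for every $k$. The same kind of argument rules out $\lambda_\ell \ge 2$, since that would force columns $1$ and $2$ both to have height $\ell$. Hence $\lambda_\ell = 1$ and $\lambda = (\ell, \ell - 1, \ldots, 1)$ is a staircase. The one genuinely substantive step is the parity reduction at the outset; everything afterward is bookkeeping comparing rows against columns, and the only point demanding care is keeping the index ranges of the two specialized congruences straight.
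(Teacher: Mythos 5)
Your proof is correct. Note that the paper offers no argument at all for this lemma -- it is introduced with ``The following observation is elementary'' and left unproved -- so there is no proof of record to compare against; your write-up simply supplies the missing elementary argument. The key reduction $h(i,j)\equiv(\lambda_i+i)+(\lambda'_j+j)+1\pmod 2$ is right, the specializations $j=1$ and $i=1$ do yield that all $\lambda_i+i$ (resp.\ all $\lambda'_j+j$) share a parity, and the final bookkeeping (strict decrease of rows from the row constraint, differences exactly $1$ and $\lambda_\ell=1$ from the no-two-equal-consecutive-columns consequence of the column constraint) is sound, including the edge case of the last row.
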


Lemma \ref{lem:fixedA}, together with Lemma \ref{lem:Atang} and Theorem \ref{thm:fixedA} imply that $SL_2$-fixed points in $\cX_{\bc}$ are very rare. Namely,

\begin{thm}\label{thm:fixedA1}
If $n = \frac{m(m+1)}{2}$, for some integer $m$, then $\cX_{\bc}^{\slt} = \{ x_{\stair} \}$. Otherwise, $\cX_{\bc}^{\slt} = \emptyset$. 
\end{thm}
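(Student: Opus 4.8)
The plan is to assemble the three preceding results and a short combinatorial count. Since we are in type $A$ with $\bc \neq 0$, the space $\cX_{\bc}$ is smooth, and (as recorded in the preceding discussion) it carries only finitely many $T$-fixed points, so each $x_{\lambda}$ is an isolated $T$-fixed point. The $\slt$-action on $\cX_{\bc}$ integrates to an $SL_2$-action, so $\cX_{\bc}^{\slt} = \cX_{\bc}^{SL_2}$, and Theorem \ref{thm:fixedA}(i) applies verbatim: I would argue that $x_{\lambda} \in \cX_{\bc}^{\slt}$ if and only if every weight of $T$ on the tangent space $T_{x_{\lambda}}\cX_{\bc}$ is odd.

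Next I would compute those weights. By Lemma \ref{lem:Atang} the $T$-character of $T_{x_{\lambda}}\cX_{\bc}$ is $\sum_{x \in \lambda}\bigl(q^{h(x)} + q^{-h(x)}\bigr)$, so the weights occurring are exactly the integers $\pm h(x)$ as $x$ ranges over the boxes of $\lambda$. As $h(x)$ and $-h(x)$ share the same parity, the odd-weight condition of Theorem \ref{thm:fixedA}(i) is equivalent to the statement that every hook length $h(x)$ of $\lambda$ is odd. Lemma \ref{lem:fixedA} then forces $\lambda$ to be a staircase partition; for the converse I would note, by direct inspection of the hook lengths of $\stair = (m, m-1, \ds, 1)$ as displayed in \eqref{eq:Young}, that every hook length of a staircase is indeed odd. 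Hence $x_{\lambda} \in \cX_{\bc}^{\slt}$ precisely when $\lambda = \stair$ is a staircase partition.

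Finally I would count staircases of size $n$. The staircase $(m, m-1, \ds, 1)$ has $m + (m-1) + \cdots + 1 = \tfrac{m(m+1)}{2}$ boxes, and for each $m$ this is the unique staircase of that size; distinct values of $m$ give distinct box counts. Thus $n$ admits a (necessarily unique) staircase partition exactly when $n = \tfrac{m(m+1)}{2}$ for some integer $m$. Combining this with the previous paragraph yields $\cX_{\bc}^{\slt} = \{ x_{\stair} \}$ when $n$ is of this triangular form, and $\cX_{\bc}^{\slt} = \emptyset$ otherwise, as claimed.

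The representation-theoretic content has already been absorbed into Lemma \ref{lem:Atang} (via the fake-polynomial/hook-length computation), so no serious obstacle remains in the main line of argument. The only points requiring care are the verification that the hypotheses of Theorem \ref{thm:fixedA}(i) genuinely hold here — smoothness of $\cX_{\bc}$ and isolatedness of the $T$-fixed points $x_{\lambda}$ — and the elementary converse to Lemma \ref{lem:fixedA} asserting that a staircase has all hook lengths odd; both are routine.
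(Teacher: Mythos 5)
Your proposal is correct and follows exactly the paper's (unwritten but clearly indicated) argument: the paper derives the theorem in one line by combining Theorem \ref{thm:fixedA}(i), Lemma \ref{lem:Atang}, and Lemma \ref{lem:fixedA}, which is precisely the chain you spell out. Your additional care about smoothness, isolatedness of the $T$-fixed points, and the converse that staircases have all odd hook lengths fills in exactly the routine details the paper leaves implicit.
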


The lemma, together with Theorem \ref{thm:fixedA} implies

\begin{prop}
There exists a finite dimensional $(\H_c,\mf{sl}_2)$-module if and only
if $n = \frac{m(m+1)}{2}$ for some $m$. In this case, any such module
$M$ is set-theoretically supported at the fixed point $x_{\stair}$ labeled by the staircase partition.
\end{prop}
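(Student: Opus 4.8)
The plan is to read off both implications from Theorem~\ref{thm:fixedA1} together with the support statement recorded just after the definition of $(\H_{\bc},\mf{sl}_2)$-modules; throughout I read ``finite dimensional module'' as ``\emph{nonzero} finite dimensional module,'' since the zero module trivially exists for every $n$. The one structural input I use repeatedly is that, as explained in \S\ref{sec:Hcentre}, the $\mf{sl}_2$-action on $\cX_{\bc}$ integrates to an $SL_2$-action, so that $\cX_{\bc}^{SL_2}=\cX_{\bc}^{\slt}$.

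For the forward implication and the support claim simultaneously, I would take a nonzero finite dimensional $(\H_{\bc},\mf{sl}_2)$-module $M$ and invoke the fact that any such $M$ is set-theoretically supported at an $SL_2$-fixed point, i.e. $\Supp M \subseteq \cX_{\bc}^{SL_2}=\cX_{\bc}^{\slt}$. As $M\neq 0$, its support is nonempty, so $\cX_{\bc}^{\slt}\neq\emptyset$; by Theorem~\ref{thm:fixedA1} this forces $n=\tfrac{m(m+1)}{2}$ for some $m$ and, in that case, $\cX_{\bc}^{\slt}=\{x_{\stair}\}$. Hence $\Supp M=\{x_{\stair}\}$, which yields both the ``only if'' direction and the asserted support.

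For the converse I must exhibit a module when $n=\tfrac{m(m+1)}{2}$, and the natural candidate is the fiber $\mc{R}|_{x_{\stair}}$ of the rank $n!$ vector bundle $\mc{R}=\H_{\bc}e$ (here $\cX_{\bc}$ is smooth, being type $A$ with $\bc\neq 0$). Concretely, $\mc{R}|_{x_{\stair}}=\H_{\bc}e/\mf{m}_{\stair}\H_{\bc}e$. Since $\mf{m}_{\stair}\subset Z(\H_{\bc})$ is central, the subspace $\mf{m}_{\stair}\H_{\bc}e$ is an $\H_{\bc}$-submodule of $\H_{\bc}e$, so the fiber is an $\H_{\bc}$-module; it is finite dimensional and nonzero because $\mc{R}$ is a vector bundle of rank $n!>0$. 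Because $x_{\stair}$ is an $SL_2$-fixed point its maximal ideal $\mf{m}_{\stair}$ is $SL_2$-stable, and $e$ is $SL_2$-invariant, so the $SL_2$-equivariant structure on $\mc{R}$ descends to an $SL_2$-action, hence an $\mf{sl}_2$-action, on the fiber, compatibly with the $\H_{\bc}$-action; thus $\mc{R}|_{x_{\stair}}$ is the desired $(\H_{\bc},\mf{sl}_2)$-module.

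The only genuinely delicate point is this last compatibility: one must verify that the $SL_2$-equivariance of $\mc{R}$, as a sheaf with $\H_{\bc}$-module structure, really produces an $\mf{sl}_2$-equivariant action map $\H_{\bc}\otimes \mc{R}|_{x_{\stair}}\to \mc{R}|_{x_{\stair}}$ on the fiber, and not merely an $\mf{sl}_2$-action on the underlying vector space. This is exactly where one uses that $x_{\stair}$ is $SL_2$-fixed, so that passage to the fiber is itself an $SL_2$-operation; it is also the reason the weaker property ``supported at a fixed point'' does not by itself yield an $(\H_{\bc},\mf{sl}_2)$-module, consistent with the remark preceding the proposition.
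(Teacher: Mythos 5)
Your proposal is correct and follows essentially the same route as the paper: the ``only if'' direction and the support claim come from the fact that the support of a finite dimensional $(\H_{\bc},\mf{sl}_2)$-module is a finite $SL_2$-stable (hence, by connectedness, $SL_2$-fixed) set together with Theorem~\ref{thm:fixedA1}, and the ``if'' direction is witnessed by $\H_{\bc}e/\mf{m}_{\stair}\H_{\bc}e$, using that $e$ is $SL_2$-invariant and $\mf{m}_{\stair}$ is $\mf{sl}_2$-stable because $x_{\stair}$ is a fixed point. Your extra care about the equivariance of the action map on the fiber is a slightly more explicit version of the paper's one-line observation that $\{\mf{sl}_2,\mf{m}\}\subset\mf{m}$, so no substantive difference.
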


\begin{proof}
If $M$ is a $(\H_{\bc},\mf{sl}_2)$-module, then its set-theoretic support must be $SL_2$-stable. If $M$ is also finite dimensional, then this support is a finite collection of points. These points must be $SL_2$-fixed since the group is connected. The result follows from Theorem \ref{thm:fixedA1}. 

Finally, we must show that there exists at least one $(\H_{\bc},
\mf{sl}_2)$-module supported at $x_{\stair}$. Let $\mf{m} \lhd
Z(\H_{\bc})$ be the maximal ideal of $x_{\stair}$. Then $\{
\mf{sl}_2, \mf{m} \} \subset \mf{m}$. Recall that the $\H_{\bc}$-module $\H_{\bc} e$ is an
$(\H_{\bc}, \mf{sl}_2)$-module. Thus, $\H_{\bc} e / \mf{m} \H_c e$ is a (simple) $(\H_{\bc}, \mf{sl}_2)$-module supported at $x_{\stair}$. 
\end{proof}

Recall that there is a unique simple $\H_{\bc}$-module $L(\lambda)$ supported at each of the $T$-fixed points $x_{\lambda}$. Notice that we have shown, 

\begin{cor}
The simple module $L(\stair) \simeq \H_{\bc} e / \mf{m}_{\stair} \H_{\bc} e$ is a $(\H_{\bc},\mf{sl}_2)$-module. 
\end{cor}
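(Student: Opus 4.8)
The plan is to identify $L(\stair)$ with the fiber $\H_{\bc} e / \mf{m}_{\stair}\H_{\bc} e$ by invoking the uniqueness of the simple $\H_{\bc}$-module supported at a $T$-fixed point, and then to transport the $\mf{sl}_2$-action across this identification. The two inputs I would use are, first, the Proposition above (which produces a $(\H_{\bc},\mf{sl}_2)$-module $\H_{\bc} e / \mf{m}_{\stair}\H_{\bc} e$ supported at $x_{\stair}$, using that $e$ is $SL_2$-invariant and that $\{\mf{sl}_2,\mf{m}_{\stair}\}\subset\mf{m}_{\stair}$), and second, the fact recalled just before the statement, that there is a \emph{unique} simple $\H_{\bc}$-module, namely $L(\stair)$, supported at $x_{\stair}$.

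First I would pin down that $\H_{\bc} e / \mf{m}_{\stair}\H_{\bc} e$ is simple as an $\H_{\bc}$-module, not merely simple within the category of $(\H_{\bc},\mf{sl}_2)$-modules. Since $\bc \neq 0$ the space $\cX_{\bc}$ is smooth, so $\H_{\bc}$ is an Azumaya algebra of rank $(n!)^2$ over its center $Z(\H_{\bc})$, and $\mc{R} = \H_{\bc} e$ is a vector bundle of rank $n!$. Consequently the fiber algebra $\H_{\bc} \o_{Z(\H_{\bc})} \big(Z(\H_{\bc})/\mf{m}_{\stair}\big)$ is isomorphic to $M_{n!}(\C)$, and the fiber $\H_{\bc} e / \mf{m}_{\stair}\H_{\bc} e = \mc{R}|_{x_{\stair}}$ is a module over it of dimension $n!$. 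Comparing ranks (generically $\H_{\bc}\o\mathrm{Frac}\,Z \cong M_{n!}$, and $\mc{R}$ has rank $n!$) shows the image of $e$ is a rank-one idempotent, so this fiber is the unique dimension-$n!$ simple module of $M_{n!}(\C)$; in particular it is a simple $\H_{\bc}$-module supported at $x_{\stair}$.

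With simplicity established, uniqueness of the simple supported at $x_{\stair}$ forces an isomorphism $L(\stair) \simeq \H_{\bc} e / \mf{m}_{\stair}\H_{\bc} e$ of $\H_{\bc}$-modules. Transporting the compatible $\mf{sl}_2$-action from the right-hand side along this isomorphism then equips $L(\stair)$ with the structure of a $(\H_{\bc},\mf{sl}_2)$-module, as claimed. The hard part is precisely the simplicity verification of the second paragraph: a priori a simple object of the category of $(\H_{\bc},\mf{sl}_2)$-modules need not be $\H_{\bc}$-simple, so one genuinely needs the Azumaya structure of the smooth Calogero--Moser space to identify the fiber as an irreducible $\H_{\bc}$-module; everything else is formal.
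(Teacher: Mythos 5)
Your proposal is correct and follows essentially the same route as the paper: the paper's proof of the preceding Proposition already exhibits $\H_{\bc}e/\mf{m}_{\stair}\H_{\bc}e$ as a (simple) $(\H_{\bc},\mf{sl}_2)$-module supported at $x_{\stair}$ (using that $e$ is $SL_2$-invariant and $\{\mf{sl}_2,\mf{m}_{\stair}\}\subset\mf{m}_{\stair}$), and the Corollary is then read off, exactly as you do, from the uniqueness of the simple $\H_{\bc}$-module supported at a $T$-fixed point. The only difference is that you spell out, via the Azumaya property of $\H_{\bc}$ over the smooth Calogero--Moser space, why the fiber is simple as an $\H_{\bc}$-module rather than merely as a $(\H_{\bc},\mf{sl}_2)$-module --- a point the paper asserts only parenthetically.
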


Equivalently, the above arguments show that $\slt$ acts on the fiber $\mc{R}_{\stair}$ of $\mc{R}$ at $x_{\stair}$. The formula for the character of the tangent space of $\cX_{\bc}(\s_n)$ at $x_{\stair}$ given by Lemma \ref{lem:Atang} shows that 
\begin{equation}\label{eq:sl2tang}
T_{x_{\stair}} \cX_{\bc} \simeq V(m) \otimes V(m-1),
\end{equation}
as $SL_2$-modules. 

Next we describe the $SL_2$-orbits $\mc{O}_{\lambda} := SL_2 \cdot x_{\lambda}$ of the $T$-fixed points $x_{\lambda}$. First, we note that Lemma \ref{lem:Tstab} implies that 

\begin{lem}\label{lem:CMstab}
The orbit $\mc{O}_{\lambda}$ is closed and $\Stab_{SL_2}(x_{\lambda})$ is reductive. 
\end{lem}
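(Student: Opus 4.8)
The plan is to deduce the lemma as a direct application of Lemma~\ref{lem:Tstab}. The essential point is that $\cX_{\bc} = \Spec Z(\H_{\bc})$ is an \emph{affine} variety carrying the $SL_2$-action that integrates the Hamiltonian $\slt$-action of the elements in \eqref{eq:sltH}. Since $x_{\lambda}$ is by construction a $T$-fixed point, we have $T \subseteq \Stab_{SL_2}(x_{\lambda})$, so the stabilizer of $x_{\lambda}$ contains $T$. Thus the hypotheses of Lemma~\ref{lem:Tstab} are met, with $X = \cX_{\bc}$, $\mc{O} = \mc{O}_{\lambda}$ and $x = x_{\lambda}$.

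Applying that lemma then yields simultaneously that $\mc{O}_{\lambda}$ is closed in $\cX_{\bc}$ and that $\Stab_{SL_2}(x_{\lambda})$ equals one of $T$, $N(T)$ or $SL_2$. It remains only to note that each of these three groups is reductive: $T \cong \C^\times$ is a torus; the normalizer $N(T)$ has identity component $T$, hence trivial unipotent radical; and $SL_2$ is semisimple. In every case the stabilizer is reductive, which finishes the argument.

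I expect essentially no serious obstacle here, as the statement is a formal corollary of Lemma~\ref{lem:Tstab}: the only points requiring comment are the observation that $T$-fixedness of $x_{\lambda}$ places $T$ inside its $SL_2$-stabilizer, and the elementary verification that each of $T$, $N(T)$, $SL_2$ is reductive. The substantive geometry --- affineness forcing the orbit to be closed, and the exclusion of a Borel (and hence any non-reductive) stabilizer --- has already been carried out in the proof of Lemma~\ref{lem:Tstab}.
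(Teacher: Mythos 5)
Your proposal is correct and matches the paper's own (implicit) argument exactly: the paper states this lemma as an immediate consequence of Lemma~\ref{lem:Tstab}, applied to the affine variety $\cX_{\bc}$ and the $T$-fixed point $x_{\lambda}$. Your added remarks — that $T$-fixedness puts $T$ inside the stabilizer and that each of $T$, $N(T)$, $SL_2$ is reductive — are precisely the routine verifications the paper leaves to the reader.
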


Lemma \ref{lem:sgntwist}, Theorem \ref{thm:fixedA1} and Lemma \ref{lem:CMstab} imply that

\begin{prop}
Let $\lambda$ be a partition of $n$. Then, one has the following 3 alternatives:
\begin{enumerate}
\item $\lambda \neq \lambda^t$\  and\ $\mc{O}_{\lambda} = \mc{O}_{\lambda^t} \simeq SL_2 / T$; 
\item $\lambda = \lambda^t\neq \stair$\  and\  $\mc{O}_{\lambda} \simeq SL_2 / N(T)$; 
\item $\lambda = \stair$\  and\  $\mc{O}_{\lambda} = \{ x_{\stair} \}$. 
\end{enumerate}
\end{prop}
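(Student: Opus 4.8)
The plan is to classify the three cases by studying the stabilizer $\Stab_{SL_2}(x_\lambda)$ and exploiting the action of the normalizer element $w_0$. By Lemma \ref{lem:CMstab}, every orbit $\mc{O}_\lambda$ is closed and the stabilizer $K:=\Stab_{SL_2}(x_\lambda)$ is reductive. Since $x_\lambda\in\cX_{\bc}^T$, the torus $T$ stabilizes $x_\lambda$, so $T\subset K$. By Lemma \ref{lem:Tstab} the only possibilities for $K$ are $T$, $N(T)$, or $SL_2$. The case $K=SL_2$ means exactly that $x_\lambda\in\cX_{\bc}^{SL_2}=\cX_{\bc}^{\slt}$, which by Theorem \ref{thm:fixedA1} happens if and only if $\lambda=\stair$; this gives alternative (3). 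So it remains to distinguish $K=T$ (giving $\mc{O}_\lambda\simeq SL_2/T$) from $K=N(T)$ (giving $\mc{O}_\lambda\simeq SL_2/N(T)$) when $\lambda\neq\stair$.

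The key is to use $w_0\in N(T)$, which generates $N(T)/T\simeq\Z/2$. First I would observe that $K\supset N(T)$ if and only if $w_0\in K$, i.e. if and only if $w_0\cdot x_\lambda=x_\lambda$. By Lemma \ref{lem:sgntwist}, $w_0\cdot x_\lambda=x_{\lambda^t}$ (since tensoring with the sign representation of $\s_n$ sends $\pi_\lambda$ to $\pi_{\lambda^t}$, the transpose partition). Therefore $w_0$ fixes $x_\lambda$ precisely when $x_{\lambda^t}=x_\lambda$, which, because the $T$-fixed points are distinct and labeled by partitions, is equivalent to $\lambda=\lambda^t$, i.e. $\lambda$ is self-transpose. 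This immediately separates the two remaining cases: if $\lambda\neq\lambda^t$ then $K=T$ and $\mc{O}_\lambda\simeq SL_2/T$, while if $\lambda=\lambda^t\neq\stair$ then $w_0\in K$, forcing $K=N(T)$ and $\mc{O}_\lambda\simeq SL_2/N(T)$, giving alternatives (1) and (2) respectively.

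For the remaining assertion in alternative (1), that $\mc{O}_\lambda=\mc{O}_{\lambda^t}$, I would note that $x_{\lambda^t}=w_0\cdot x_\lambda\in SL_2\cdot x_\lambda=\mc{O}_\lambda$, so $x_{\lambda^t}$ and $x_\lambda$ lie in the same $SL_2$-orbit; hence $\mc{O}_{\lambda^t}=\mc{O}_\lambda$. This also explains why in case (1) both $\lambda$ and $\lambda^t$ index the same orbit. Combining the three cases yields the stated trichotomy.

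I do not anticipate a serious obstacle here, since all the substantive work has been done in the preceding lemmas; the proof is essentially a bookkeeping argument assembling Lemma \ref{lem:sgntwist}, Theorem \ref{thm:fixedA1}, and Lemma \ref{lem:CMstab}. The one point requiring mild care is the identification $N(T)/T\simeq\Z/2$ with $w_0$ a nontrivial coset representative, together with the fact that a reductive subgroup of $SL_2$ containing $T$ and $w_0$ must be all of $N(T)$ (and cannot be a strictly larger proper subgroup) — this follows because the intermediate closed subgroups between $N(T)$ and $SL_2$ are only $N(T)$ and $SL_2$ itself, the latter excluded by $\lambda\neq\stair$. Thus the classification is forced, and the proof concludes.
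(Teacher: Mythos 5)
Your proof is correct and follows essentially the same route as the paper, which simply asserts that the proposition follows by combining Lemma \ref{lem:sgntwist}, Theorem \ref{thm:fixedA1} and Lemma \ref{lem:CMstab}; your write-up is a faithful and complete assembly of exactly those three ingredients (stabilizer must be $T$, $N(T)$ or $SL_2$; the $SL_2$ case is $\lambda=\stair$; and $w_0\cdot x_\lambda=x_{\lambda^t}$ decides between $T$ and $N(T)$).
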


\subsection{The $SL_2$-structure of $\mc{R}_{\stair}$}

We define the $SL_2$-module
$$
U_m := (V(m-1) \oplus V(m-2)) \otimes \bigotimes_{i = 1}^{m-2} (V(i) \oplus V(i-1))^{\o 2}.
$$

\begin{prop}\label{prop:Lmslt}
There is an isomorphism of  $SL_2$-modules:
\begin{equation}\label{eq:sl2iso}
\mc{R}_{\stair} \simeq [U_m \otimes U_{m-2} \otimes \cdots \otimes
U_{2,1}]^{\oplus \dim \pi_{\stair}}
\end{equation}
where the final term $U_{2,1}$ is either $U_2$ or $U_1$ depending on whether $m$ is even or odd.  
\end{prop}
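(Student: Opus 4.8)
The plan is to reduce the stated isomorphism to an identity of $SL_2$-characters and then to read off the right-hand side as a product indexed by the boxes of $\stair$. By the Corollary preceding the proposition, $\mc{R}_{\stair}\cong L(\stair)$ is a genuine finite-dimensional $SL_2$-module, so it is determined up to isomorphism by its $T$-character, a palindromic Laurent polynomial in $q$; the same is true of the right-hand side, so it suffices to match $T$-characters. For this I would first rewrite the right-hand side combinatorially. The factor $U_m$ records exactly the hook lengths occurring in the first row and first column of $\stair=(m,m-1,\dots,1)$; removing this ``hook frame'' leaves $(m-2,m-3,\dots,1)$, whose frame is recorded by $U_{m-2}$, and so on. Since every box $x\in\stair$ has odd hook length (Lemma \ref{lem:fixedA}), write $h(x)=2k_x+1$ and attach to $x$ the $SL_2$-module $V(k_x)\oplus V(k_x-1)$, of dimension $h(x)$ and $T$-character $q^{k_x}+q^{k_x-1}+\cdots+q^{-k_x}$. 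Tracking the peeling shows
\[
U_m\otimes U_{m-2}\otimes\cdots\otimes U_{2,1}\;\cong\;\bigotimes_{x\in\stair}\bigl(V(k_x)\oplus V(k_x-1)\bigr),
\]
so the proposition becomes the character identity $\chi_T(\mc{R}_{\stair})=\dim\pi_{\stair}\cdot\prod_{x\in\stair}\bigl(q^{k_x}+\cdots+q^{-k_x}\bigr)$. At $q=1$ both sides equal $n!$ by the hook-length formula, a first consistency check.

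For the left-hand side I would follow the route sanctioned in the introduction and invoke Haiman's description of the Procesi fibre. As a bigraded $\s_n$-module, $\mc{P}|_{y_{\stair}}$ has Frobenius characteristic the modified Macdonald polynomial $\widetilde{H}_{\stair}(X;q,t)$, and, since $\mc{R}_{\stair}\cong\mc{P}|_{y_{\stair}}$ as $SL_2\times\s_n$-modules, the total ($\s_n$-forgetting) bigraded Hilbert series is $\langle\widetilde{H}_{\stair}(X;q,t),\,p_1^{\,n}\rangle$. Restricting the $GL_2$-action to $SL_2$ amounts to substituting $t=q^{-1}$ in this bigraded series. Thus the task reduces to the symmetric-function identity
\[
\bigl\langle\widetilde{H}_{\stair}(X;q,t),\,p_1^{\,n}\bigr\rangle\big|_{t=q^{-1}}
=\frac{n!}{\prod_{x}h(x)}\cdot\prod_{x\in\stair}\bigl(q^{k_x}+\cdots+q^{-k_x}\bigr),
\]
whose right-hand side is the hook product $\prod_x\tfrac{q^{h(x)/2}-q^{-h(x)/2}}{q^{1/2}-q^{-1/2}}$ up to the factor $\dim\pi_{\stair}$. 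The self-conjugacy of $\stair$ and the fact that all its hooks are odd are exactly what make such a factorisation possible.

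Alternatively I would give a self-contained derivation of $\chi_T(\mc{R}_{\stair})$ not using the $n!$-theorem. Since $\cX_{\bc}$ is smooth and $x_{\stair}$ is an isolated $SL_2$-fixed point, the equivariant vector bundle $\mc{R}=\H_{\bc}e$ is $SL_2$-equivariantly trivial on the formal neighbourhood of $x_{\stair}$, giving $\chi_T(\widehat{\H_{\bc}e})=\chi_T(\widehat{\C[\cX_{\bc}]})\cdot\chi_T(\mc{R}_{\stair})$. By Lemma \ref{lem:Atang} the tangent weights at $x_{\stair}$ are $\{\pm h(x)\}$, so $\chi_T(\widehat{\C[\cX_{\bc}]})=\prod_{x}\bigl((1-q^{h(x)})(1-q^{-h(x)})\bigr)^{-1}$. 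The remaining numerator $\chi_T(\widehat{\H_{\bc}e})$ I would extract from the global character $\chi_T(\H_{\bc}e)=(1-q)^{-n}(1-q^{-1})^{-n}$ (immediate from the PBW identification $\H_{\bc}e\cong\C[\h]\otimes\C[\h^*]$ as $T$-graded spaces, since $x$ has weight $+1$ and $y$ has weight $-1$) together with the $\C^*$-localisation results of \cite{BellEnd} already used in Lemma \ref{lem:Atang}, which isolate the contribution of the single fixed point $x_{\stair}$.

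The main obstacle is the product formula itself: in the first route it is the Macdonald specialisation at $t=q^{-1}$ for the staircase, and in the second it is the localisation step singling out $x_{\stair}$ from the global character. Once the $T$-character of $\mc{R}_{\stair}$ is shown to equal $\dim\pi_{\stair}\cdot\prod_x(q^{k_x}+\cdots+q^{-k_x})$, the palindromy of this Laurent polynomial together with the fact that $\mc{R}_{\stair}$ is an honest $SL_2$-module upgrades the character equality to the asserted isomorphism of $SL_2$-modules, completing the proof.
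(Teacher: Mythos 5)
Your overall architecture coincides with the paper's: both proofs reduce the claim to an equality of $T$-characters (legitimate, since both sides are genuine finite-dimensional $SL_2$-modules), and both recognize the right-hand side as a product over the boxes of $\stair$ governed by the odd hook lengths. Your per-box rewriting $\bigotimes_{x\in\stair}\bigl(V(k_x)\oplus V(k_x-1)\bigr)$ with $h(x)=2k_x+1$ is exactly what the paper's inductive ``peeling of the largest hook'' of $H_{\stair}(q)$ produces, and is arguably a cleaner way to package it. The divergence is in how the character of the left-hand side is obtained. The paper identifies $\mc{R}_{\stair}\simeq L(\stair)$ and then simply quotes $\chi_T(L(\stair))=q^{-n(\stair)}H_{\stair}(q)(1-q)^{-n}\dim\pi_{\stair}$ from \cite[Lemma 3.3]{Singular} (after the grading shift placing $eL(\stair)$ in degree zero), and factors it. Your first route --- passing to $\mc{P}_{\stair}$ and specializing $\widetilde{H}_{\stair}(X;q,t)$ at $t=q^{-1}$ --- does give the correct answer, but it imports precisely the two ingredients the paper is at pains to avoid at this point: Haiman's $n!$-theorem and the identification $\mc{R}_{\stair}\simeq\mc{P}_{\stair}$, which the paper only establishes in the following section. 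Your second route is closer in spirit to the actual source of the cited formula, but as written it has a genuine soft spot: $\chi_T(\H_{\bc}e)=(1-q)^{-n}(1-q^{-1})^{-n}$ is not a well-defined character (the $T$-weight spaces of $\C[\h]\otimes\C[\h^*]$ are infinite-dimensional, e.g.\ the weight-zero space contains all $x_1^ky_1^k$), so one cannot ``extract'' the contribution of $x_{\stair}$ from it by naive manipulation; the isolation of a single fixed point via attracting-set/localization arguments is exactly the nontrivial content of \cite{BellEnd} and of \cite[Lemma 3.3]{Singular}, and you defer that step rather than carry it out. In short: same skeleton, equivalent and slightly tidier treatment of the right-hand side, but the key input --- the character of $L(\stair)$ --- is in the end still cited rather than proved, which leaves your second route on the same footing as the paper and makes your first route strictly heavier than necessary.
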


\begin{proof}
As an $(\H_{\bc},\slt)$-module, $\mc{R}_{\stair}$ equals $\H_{\bc} e /
\mf{m} \H_{\bc} e $. As $\H_{\bc}$-modules, $\H_{\bc} e / \mf{m}
\H_{\bc} e $ is isomorphic to $L(\stair)$. Thus, it suffices to show
that the character of $L(\stair)$ as an $SL_2$-module equals the
character of the right hand side of equation (\ref{eq:sl2iso}). The
character of $L(\stair)$ is given in \cite[Lemma
3.3]{Singular}. However, we must shift the grading on $L(\stair)$ from
the one given in \textit{loc. cit.} so that the isomorphism $\H_{\bc} e/ \mf{m} \H_{\bc} e \rightarrow L(\stair)$ is graded i.e. we require
that the one-dimensional space $e L(\stair)$ lies in degree zero. Then, 
$$
\chi_T(L(\stair)) =  q^{-n(\stair)} \frac{H_{\stair}(q)}{(1 - q)^n} \dim \pi_{\stair}.
$$
Note that 
$n(\stair) = \frac{1}{6}(m-1)m(m+1)$.
For the staircase partition, the character of $L(\stair)$ has a natural factorization. The largest hook in $\stair$ is $(m,1^{m-1})$ and $\stair = (m,1^{m-1}) + [m-2]$, therefore peeling away the hooks gives  $q^{-n(\stair)}  / q^{-n([m-2])} = q^{- (m-1)^2}$ and  
\begin{align*}
\frac{H_{\stair}(q)}{(1 - q)^{2m - 1} H_{[m-2]}(q)} & = \frac{1}{(1 - q)^{2m -1}} \left( (1 - q^{2m - 1}) \prod_{i = 1}^{m-1} (1 - q^{2i - 1})^2 \right) \\
 & = \frac{1 - q^{2m-1}}{1 -q} \prod_{i = 1}^{m-1} \left(  \frac{1 - q^{2i - 1}}{1 -q} \right)^2.
\end{align*}
Thus,  
$$
\frac{H_{\stair}(q) q^{- (m-1)^2}}{(1 - q)^{2m - 1} H_{[m-2]}(q)} = (q^{m-1} + q^{m-2} + \cdots + q^{-(m-1)}) \prod_{i = 1}^{m-2} (q^{i} + q^{i-1} + \cdots + q^{-i})^2. 
$$
This is precisely the character of $U_m$. 
\end{proof}

One would like to refine this character by taking into account the action of $W$ too. We decompose $L(\stair)$ as a $W \times SL_2$-module, 
\begin{equation}\label{eq:Ldecomp}
L(\stair) = \bigoplus_{ \la \vdash n} \pi_{\lambda} \otimes V_{\lambda}. 
\end{equation}
Then the \textit{exponents} of $\lambda$ are defined to be the positive integers $0 \le e_{1} \le e_{2} \le \cdots $ such that $V_{\lambda} = \bigoplus_{i} V(e_{i})$. The fact that $L(\stair) $ is the regular representation as a $W$-module implies that  
$$
\dim \pi_{\lambda} = \sum_{i} (e_i + 1) = \dim V_{\la}.
$$
\begin{example}
For $m = 3$, we have $n = 6$ and 
$$
\begin{array}{c|l}
\lambda & e_{1},e_2, \ds \\ \hline 
(6) & 0 \\
(5,1) & 1,2 \\
(4,2) & 1,2,3 \\
(4,1,1) & 0,1,2,3 \\
(3,3) & 0,3 \\
(3,2,1) & 0,1^2,2^2,4 \\
(3,1,1,1) & 0,1,2,3 \\
(2,2,2) & 0,3 \\    
(2,2,1,1) & 1,2,3  \\
(2,1,1,1,1) & 1,2 \\
(1,1,1,1,1,1) & 0  
\end{array}
$$
\end{example}


\begin{lem}\label{lem:exponentduality}
The exponents of $\lambda$ equal the exponents of $\lambda^t$. 
\end{lem}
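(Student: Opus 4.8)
The plan is to reformulate the statement representation-theoretically and then deduce it from a single sign-twisting symmetry of the Cherednik algebra. By \eqref{eq:Ldecomp} the exponents of $\lambda$ are encoded in the $SL_2$-module $V_{\lambda}$, so the assertion is equivalent to $V_{\lambda} \simeq V_{\lambda^t}$ as $SL_2$-modules for every $\lambda$. Since $\pi_{\lambda} \o \sgn \simeq \pi_{\lambda^t}$ in type $A$, this is in turn equivalent to $L(\stair) \o \sgn \simeq L(\stair)$ as $W \times SL_2$-modules, $SL_2$ acting trivially on $\sgn$: indeed $L(\stair)\o\sgn = \bigoplus_{\lambda} \pi_{\lambda^t}\o V_{\lambda} = \bigoplus_{\mu} \pi_{\mu}\o V_{\mu^t}$, and comparing $\pi_{\mu}$-isotypic parts with \eqref{eq:Ldecomp} yields the claim. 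It therefore suffices to realize the sign twist by a symmetry compatible with the $SL_2$-structure.

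First I would construct an $SL_2$-equivariant automorphism $\rho$ of $\H_{\bc}$ by setting $\rho(x) = \sqrt{-1}\,x$, $\rho(y) = \sqrt{-1}\,y$ for $x \in \h^*,\ y \in \h$, and $\rho(w) = \sgn(w)\, w$ for $w \in W$. A direct check on the defining relations shows $\rho$ is well defined: the factor $(\sqrt{-1})^2 = -1$ produced by $[\rho(y),\rho(x)]$ is exactly compensated by $\rho(s) = -s$ for each reflection $s$, so the commutator relation is preserved (the commuting relations trivially are). Because $\rho$ acts on $\h \oplus \h^*$ by the scalar $\sqrt{-1}$ and fixes $W$ pointwise up to sign, it commutes with the $SL_2$-action of \S\ref{sec:Hcentre}, which is linear on $\h \oplus \h^*$ and trivial on $W$; hence $\rho$ is $SL_2$-equivariant. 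On $Z(\H_{\bc}) = \C[\cX_{\bc}]$ the map $\rho$ induces an $SL_2$-equivariant automorphism of $\cX_{\bc}$, and by the same argument as in the proof of Lemma \ref{lem:sgntwist} it sends $x_{\lambda}$ to $x_{\lambda^t}$. As $\stair = \stair^t$, we conclude $\rho(\m_{\stair}) = \m_{\stair}$.

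Next I would transport $L(\stair)$ through $\rho$. The twist ${}^{\rho}L(\stair)$ (the same underlying space, with $\H_{\bc}$ acting via $\rho$) is again a simple $\H_{\bc}$-module supported at $x_{\stair}$ (as $\rho$ fixes this point), hence isomorphic to $L(\stair)$ by the uniqueness of the simple module at each $T$-fixed point; as a $W$-module it is $L(\stair)\o\sgn$. Because $\rho$ is $SL_2$-equivariant, retaining the same $SL_2$-action makes ${}^{\rho}L(\stair)$ an $(\H_{\bc},\slt)$-module. The essential point is then to upgrade the $\H_{\bc}$-isomorphism ${}^{\rho}L(\stair) \simeq L(\stair)$ to an $(\H_{\bc},\slt)$-isomorphism. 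This follows from the rigidity of the $\slt$-structure on a finite-dimensional simple $\H_{\bc}$-module: given two compatible maps $\slt \to \End(L(\stair))$, their difference lands in $\End_{\H_{\bc}}(L(\stair)) = \C$ by Schur's lemma, hence is a Lie algebra homomorphism $\slt \to \C$ and therefore zero since $\slt = [\slt,\slt]$. Transporting the $SL_2$-action of ${}^{\rho}L(\stair)$ along any $\H_{\bc}$-isomorphism and invoking this uniqueness shows the isomorphism already intertwines the two $SL_2$-actions. Thus $L(\stair)\o\sgn \simeq {}^{\rho}L(\stair) \simeq L(\stair)$ as $W\times SL_2$-modules, which by the first paragraph gives $V_{\lambda} \simeq V_{\lambda^t}$ and the equality of exponents.

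The main obstacle is precisely this last upgrade: the naive sign twist is only a symmetry of the $\H_{\bc}$-module structure, and one must verify it can be made compatible with the $SL_2$-action. The two ingredients making this work are the $SL_2$-equivariance of $\rho$ (which forces scaling both $\h^*$ and $\h$ by the \emph{same} scalar $\sqrt{-1}$, the unique choice that simultaneously fixes $\bc$ and commutes with $SL_2$) and the perfectness of $\slt$, giving rigidity of the $\slt$-structure. A variant avoiding $\rho$ would instead invoke the genuine sign-twist isomorphism $\H_{\bc}\to \H_{-\bc}$ together with the observation that the finite-dimensional $SL_2\times W$-module $L(\stair)$ is locally constant as $\bc$ ranges over the connected locus where $\cX_{\bc}$ is smooth; but working with the automorphism $\rho$ keeps everything at a fixed parameter and is cleaner.
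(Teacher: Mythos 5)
Your proof is correct and is essentially the paper's argument: both rest on a sign-twist symmetry of the Cherednik algebra that is $SL_2$-equivariant by \eqref{eq:sltH}, exchanges $\pi_{\lambda}$ with $\pi_{\lambda^t}$, and fixes $L(\stair)$. Your version is in fact more careful than the paper's one-line proof --- you absorb the parameter change $\bc \mapsto -\bc$ by also scaling $\h \oplus \h^*$ by $\sqrt{-1}$ so as to obtain an automorphism of $\H_{\bc}$ itself, and you justify via Schur's lemma and $\slt = [\slt,\slt]$ that the resulting $\H_{\bc}$-module isomorphism is automatically $SL_2$-equivariant, two points the paper leaves implicit.
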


\begin{proof}
There is an algebra isomorphism $\sgn : \H_{\bc} \iso \H_{-\bc}$ defined by $\sgn(x) = x$, $\sgn(y) = y$ and $\sgn(w) = (-1)^{\ell(w)} w$, where $x \in \h^*, y \in \h$, $w \in \s_n$ and $\ell$ is the length function. It is clear from (\ref{eq:sltH}) that $\sgn$ is $SL_2$-equivariant. Moreover ${}^{\sgn} L(\lambda) \simeq L(\lambda^t)$. In particular, ${}^{\sgn} L(\stair) \simeq L(\stair)$. This isomorphism maps $V_{\lambda}$ to $V_{\lambda^t}$ since ${}^{\sgn} \pi_{\lambda} \simeq \pi_{\lambda} \o \sgn \simeq \pi_{\lambda^t}$. 
\end{proof}

Using the deeper combinatorics of Macdonald polynomials, we prove

\begin{prop}\label{schur}
$\chi_T(V_{\lambda}) = \widetilde{K}_{\lambda,\stair}(q,q^{-1})$.
\end{prop}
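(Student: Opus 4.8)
The strategy is to assemble the characters $\chi_T(V_\lambda)$ into a single symmetric function and to recognize it as a modified Macdonald polynomial. By \eqref{eq:Ldecomp} we have $L(\stair) = \bigoplus_{\lambda \vdash n} \pi_\lambda \o V_\lambda$ as an $\s_n \times SL_2$-module; writing the $T$-action through the $H$-grading of \eqref{eq:sltH}, the graded Frobenius character of $L(\stair)$ is
\[
\operatorname{ch}_q L(\stair) \ = \ \sum_{\lambda \vdash n} \chi_T(V_\lambda)\, s_\lambda .
\]
Since $\widetilde{H}_{\stair}(x;q,t) = \sum_\lambda \widetilde{K}_{\lambda,\stair}(q,t)\, s_\lambda$ by definition of the Kostka--Macdonald polynomials, the proposition is equivalent to the single identity of symmetric functions
\[
\operatorname{ch}_q L(\stair) \ = \ \widetilde{H}_{\stair}\bigl(x;\, q,\, q^{-1}\bigr),
\]
from which $\chi_T(V_\lambda) = \widetilde{K}_{\lambda,\stair}(q,q^{-1})$ follows by extracting the coefficient of $s_\lambda$. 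Thus the whole statement reduces to computing one graded Frobenius character and matching it, after a specialization, with Haiman's formula.

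To establish this identity I would compare $\mc{R}_{\stair} = \H_{\bc}e/\mf{m}\H_{\bc}e \simeq L(\stair)$ with the fiber $\mc{P}|_{y_{\stair}}$ of the Procesi bundle, i.e. the Garsia--Haiman module $D_{\stair}$, whose bigraded Frobenius character is $\widetilde{H}_{\stair}(x;q,t)$ by the $n!$-theorem. The only grading available on the Calogero--Moser side is the single $\slt$-weight grading of \eqref{eq:sltH}, and the content of the identity is that it corresponds to restricting the two-dimensional $GL_2$-grading of $D_{\stair}$ to the antidiagonal one-parameter subgroup of the diagonal torus of $GL_2$. Concretely, the $x$-type coordinates carry $T$-weight $+1$ and the $y$-type coordinates carry $T$-weight $-1$ (\S\ref{sec:Hcentre}), so an element of $GL_2$-bidegree $(i,j)$ acquires $T$-weight $i - j$; hence restriction to $T$ sends a bidegree-$(i,j)$ contribution $q^i t^j$ to $q^{\,i-j}$, which is exactly the specialization $t = q^{-1}$. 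Granting the identification of $\operatorname{ch}_q L(\stair)$ with the $T$-restriction of the bigraded character of $D_{\stair}$, the displayed identity is precisely $\widetilde{H}_{\stair}(x;q,t)|_{t = q^{-1}}$.

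The main obstacle is justifying this identification of gradings: one must verify that the single grading produced by the Cherednik-algebra $\slt$-triple agrees with the antidiagonal specialization of the Haiman bigrading, so that the genuinely two-variable polynomial $\widetilde{H}_{\stair}(x;q,t)$ collapses to the claimed one-variable specialization --- equivalently, one must match $\operatorname{ch}_q L(\stair)$, as computed from the Cherednik side via \cite{Singular}, against $\widetilde{H}_{\stair}(x;q,q^{-1})$ by the combinatorics of Macdonald polynomials. Several checks confirm the specialization and guide the matching. First, the self-conjugacy $\stair = \stair^t$ yields $\widetilde{H}_{\stair}(x;q,t) = \widetilde{H}_{\stair}(x;t,q)$, so each $\widetilde{K}_{\lambda,\stair}(q,t)$ is symmetric in $q$ and $t$; at $t = q^{-1}$ this makes $\widetilde{K}_{\lambda,\stair}(q,q^{-1})$ invariant under $q \mapsto q^{-1}$, as is forced by the palindromic $T$-character of the $\slt$-module $V_\lambda$. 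Second, combining this with the transpose symmetry of the Kostka--Macdonald polynomials gives $\widetilde{K}_{\lambda,\stair}(q,q^{-1}) = \widetilde{K}_{\lambda^t,\stair}(q,q^{-1})$, reproducing Lemma \ref{lem:exponentduality}. Finally, applying $s_\lambda \mapsto \dim \pi_\lambda$ must recover the total character $q^{-n(\stair)} H_{\stair}(q)(1-q)^{-n} \dim \pi_{\stair}$ of Proposition \ref{prop:Lmslt}; at $q = 1$ both sides equal $n!$ by the hook-length formula, a first numerical confirmation of the identity.
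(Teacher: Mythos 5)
Your reduction of the proposition to the single symmetric-function identity $\sum_{\lambda}\chi_T(V_\lambda)\,s_\lambda=\widetilde{H}_{\stair}(x;q,q^{-1})$ is correct, and so is your observation that $t=q^{-1}$ corresponds to restricting the $GL_2$-bigrading to the antidiagonal torus. But the proof of that identity is exactly what you have not supplied: you propose to obtain it by identifying $L(\stair)$ with the $t=q^{-1}$ specialization of the Garsia--Haiman module $D_{\stair}$, and you yourself flag this identification as ``the main obstacle'' without overcoming it. The consistency checks you list (palindromicity, transpose symmetry, the value at $q=1$) do not close the gap. Moreover the obstacle is not merely one of ``matching gradings'' on a single module: $L(\stair)$ lives on the Calogero--Moser space and $D_{\stair}$ on the Hilbert scheme, so relating them requires the Gordon--Smith degeneration $p:\mf{X}\to\mathbb{A}^1$, the bundle $\widetilde{\mc{P}}$ interpolating between $\mc{R}$ and $\mc{P}$, a rigidity argument along the fixed-point component $\mathbb{A}_{\stair}$, and the full $n!$-theorem. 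This is precisely the alternative route described in \S\ref{sec:Procesi}, which the paper explicitly declines to use for the main proof because of its dependence on the $n!$-theorem.

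The paper's own argument stays entirely on the Cherednik side and needs none of this. One writes $L(\stair)$ as a graded quotient of the Verma module $\Delta(\stair)=\H_{\bc}(W)\otimes_{\C[\h^*]\rtimes W}\pi_{\stair}$, whose graded $W$-character is $s_{\stair}\left[\frac{Z}{1-q}\right]$; Gordon's result \cite{Baby} gives the graded multiplicity of $L(\stair)$ in $\Delta(\stair)$ as $H_{\stair}(q)^{-1}$, whence the graded Frobenius character of $L(\stair)$, normalized so that $eL(\stair)$ sits in degree $0$, equals $q^{-n(\stair)}H_{\stair}(q)\,s_{\stair}\left[\frac{Z}{1-q}\right]$. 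The purely combinatorial specialization formula for transformed Macdonald polynomials, \cite[Proposition 3.5.10]{HaimanSurvey}, identifies this expression with $\widetilde{H}_{\stair}(x;q,q^{-1})$, and extracting the coefficient of $s_\lambda$ finishes the proof. To salvage your approach you would have to either carry out the degeneration argument in full (accepting the $n!$-theorem as input) or replace it with this direct computation.
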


\begin{proof}
Let $s_{\lambda}$ denote the Schur polynomial associated to the partition $\lambda$ so that $s_{\lambda} \left[ \frac{Z}{1 - q} \right]$ is a particular plethystic substitution of $s_{\lambda}$; we refer the reader to \cite{HaimanSurvey} for details. 

The module $L(\stair)$ is a graded quotient of the Verma module $\Delta(\stair) = \H_{\bc}(W) \o_{\C[\h^*] \rtimes W} \pi_{\stair}$. The graded $W$-character of $\Delta(\stair)$ is given by $s_{\stair} \left[ \frac{Z}{1 - q} \right]$. As shown in \cite{Baby}, the graded multiplicity of $L(\stair)$ in $\Delta(\stair)$ is given by 
$$
(q)_n^{-1} q^{-n(\stair)} f_{\stair}(q) = H_{\stair}(q)^{-1} = \prod_{i = 1}^m (1 - q^{2i -1})^{-(m - i)}
$$
Therefore, the graded $W$-character, shifted by $q^{-n(\stair)}$ so that $eL(\stair)$ is in degree zero, of $L(\stair)$ equals $q^{-n(\stair)} H_{\stair}(q) s_{\stair} \left[ \frac{Z}{1 - q} \right]$. This implies that 
\begin{equation}\label{eq:schursub}
\chi_T(V_{\lambda}) = \left\langle s_{\mu}, q^{-n(\stair)} \prod_{i = 1}^m (1 - q^{2i -1})^{m - i} s_{\stair} \left[ \frac{Z}{1 - q} \right] \right\rangle.
\end{equation}
The fact that the right hand side of (\ref{eq:schursub}) equals $\widetilde{K}_{\lambda,\stair}(q,q^{-1})$   follows from the property of transformed Macdonald polynomials, \cite[Proposition 3.5.10]{HaimanSurvey}. 
\end{proof}

\begin{rem}
A similar analysis can be done for other Coxeter groups $W$. For instance, when $W$ is a Weyl group of type $B$ and $\bc$ generic, it is easily seen that $\cX_{\bc}(W)^{\slt} \equiv \emptyset$. 
\end{rem}

\section{The Hilbert scheme of points in the plane}\label{sec:Hilbert}

The group $SL_2$ also acts naturally on the Hilbert scheme $\Hilb^n \C^2$ of $n$ points in the plane. This is the restriction of a $GL_2$-action, induced by the natural action of $GL_2$ on $\C^2$. 

\subsection{} The $T$-fixed points $y_{\lambda}$ in $\Hilb^n \C^2$ are also labeled by partitions $\lambda$ of $n$. If $I$ is the $T$-fixed, codimension $n$ ideal labeled by $\lambda$, then it is uniquely defined by the fact that the corresponding quotient $\C[x,y] / I_{\lambda}$ has basis given by $x^{i} y^j$ with 
$$
(i,j) \in Y_{\lambda} := \{ (i,j) \in \Z^2 \ | \ 0 \le j \le \ell(\lambda) - 1, \ 0 \le i \le \lambda_j - 1\}, 
$$
the \textit{Young tableau} of $\lambda$. The orbit $GL_2 \cdot y_{\lambda}$ is denoted $\mc{O}_{\lambda}$. Identify $\Cs$ with the scalar matrices in $GL_2$. Then $(\Hilb^n \C^2)^{\Cs}$ is the moduli space of homogeneous ideals of codimension $n$ in $\C[x,y]$, as studied in \cite{Iarrobino}. It is a smooth, projective $GL_2$-stable subvariety of $\Hilb^n \C^2$, containing the points $y_{\lambda}$. Notice that the $GL_2$-orbits and $SL_2$-orbits in $(\Hilb^n \C^2)^{\Cs}$ agree since the action factors through $PGL_2$. 

\begin{lem}
If $n = \frac{m(m+1)}{2}$, for some integer $m$, then $(\Hilb^n \C^2)^{GL_2} = \{ y_{\stair} \}$. Otherwise, $(\Hilb^n \C^2)^{GL_2} = \emptyset$. 
\end{lem}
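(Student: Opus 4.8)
The plan is to reduce the Hilbert scheme statement to the Calogero–Moser statement already established in Theorem~\ref{thm:fixedA1}, exploiting the ``hyper-K\"ahler rotation'' relationship mentioned in the introduction, or alternatively to give a direct argument via the tangent space weights exactly parallel to the Calogero–Moser case. I would favor the direct approach, since all the necessary ingredients are already in place. First I would observe that a point $y_\lambda \in (\Hilb^n \C^2)^T$ is fixed by $GL_2$ if and only if it is fixed by $SL_2$, because the scalar torus $\Cs$ acts trivially on the closed subvariety $(\Hilb^n \C^2)^{\Cs}$ containing all the $y_\lambda$ (as noted just before the lemma, the action there factors through $PGL_2$), so $GL_2$-invariance is equivalent to $SL_2$-invariance, which in turn—since $\Hilb^n\C^2$ is smooth with an algebraic $SL_2$-action—is equivalent to membership in $(\Hilb^n \C^2)^{\slt}$.

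Next I would apply Theorem~\ref{thm:fixedA}(i): since each $y_\lambda$ is an isolated $T$-fixed point in the smooth variety $\Hilb^n \C^2$, the point $y_\lambda$ lies in $(\Hilb^n\C^2)^{\slt}$ if and only if all the weights of $T$ on the tangent space $T_{y_\lambda}(\Hilb^n \C^2)$ are odd. The key computational input is the character of this tangent space. I would invoke the classical description (due to Ellingsrud–Str\o mme, and used extensively by Haiman) of the tangent weights at a monomial ideal in terms of the arm and leg statistics of the boxes of $\lambda$: the $2n$ tangent weights come in the pairs $\pm(\text{arm} + \text{leg} + 1)$ indexed by the boxes, i.e.\ precisely $\pm h(x)$ over $x \in \lambda$. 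This mirrors Lemma~\ref{lem:Atang} exactly; in fact one could simply cite that the tangent-space $T$-character at $y_\lambda$ agrees with $\sum_{x\in\lambda}(q^{h(x)}+q^{-h(x)})$, matching the Calogero–Moser case.

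With this character in hand, the condition ``all $T$-weights on $T_{y_\lambda}$ are odd'' becomes the condition ``all hook lengths $h(x)$, $x\in\lambda$, are odd.'' By Lemma~\ref{lem:fixedA}, this forces $\lambda$ to be the staircase partition $\stair$, and conversely every hook length of a staircase partition is odd. Hence $y_\lambda$ is $GL_2$-fixed precisely when $\lambda = \stair$, which can occur only when $n = m(m+1)/2$; in that case $(\Hilb^n\C^2)^{GL_2} = \{y_\stair\}$, and otherwise the fixed locus is empty. This is exactly the statement to be proved, and the argument is a verbatim transcription of the proof of Theorem~\ref{thm:fixedA1}, with Lemma~\ref{lem:Atang} replaced by the Ellingsrud–Str\o mme tangent weight formula.

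The main obstacle, such as it is, lies in justifying the tangent weight computation: one must either cite the arm/leg tangent-space formula and verify that the $T$-weights it produces coincide with $\pm h(x)$ under the chosen identification of $T \subset SL_2$ (so that scalar matrices act trivially and the relevant weight is the difference of the two $\C^*$-weights), or invoke hyper-K\"ahler rotation to transport Lemma~\ref{lem:Atang} directly. I expect the bookkeeping of weight conventions—checking that the $SL_2$-weight at a box equals the hook length and that oddness of the hook length is the correct parity condition—to be the only genuinely delicate point; everything else follows formally from Theorem~\ref{thm:fixedA}(i), Lemma~\ref{lem:fixedA}, and the reduction of $GL_2$-fixedness to $SL_2$-fixedness.
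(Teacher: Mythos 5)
Your argument is correct, but it is a genuinely different route from the one the paper takes. The paper's proof is a two-line representation-theoretic observation: any $GL_2$-fixed point is fixed by the diagonal torus of $GL_2$, hence is a monomial ideal $I_\lambda$; if $I_\lambda$ is $GL_2$-stable then each graded piece $(I_\lambda)_k$ is a $GL_2$-submodule of $\C[x,y]_k\cong \mathrm{Sym}^k(\C^2)^*$, which is an \emph{irreducible} $GL_2$-module, so $(I_\lambda)_k$ is $0$ or all of $\C[x,y]_k$; hence $I_\lambda=\C[x,y]_{\ge m}$ for some $m$ and $\lambda=\stair$ (the converse being obvious). This needs no smoothness, no tangent-space computation, and no reduction from $GL_2$ to $SL_2$; the paper also notes it follows from Kumar--Thomsen. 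Your proof instead transposes the proof of Theorem~\ref{thm:fixedA1} to the Hilbert scheme: reduce $GL_2$-fixedness to $SL_2$-fixedness via the scalar torus, apply the parity criterion of Theorem~\ref{thm:fixedA}(i) at the isolated $T$-fixed points $y_\lambda$, identify the $T$-weights of $T_{y_\lambda}\Hilb^n\C^2$ as $\pm h(x)$ using the Ellingsrud--Str\o mme/Haiman arm-and-leg formula (the specialization $t=q^{-1}$ does give $\pm(a(x)+l(x)+1)=\pm h(x)$, and the paper itself cites this formula later when matching fixed points of $\mf{X}^T$), and conclude by Lemma~\ref{lem:fixedA}. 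This is all valid --- the only points to make explicit are that every $GL_2$-fixed point is already among the $y_\lambda$ (so that checking the monomial ideals suffices) and that the $y_\lambda$ are isolated fixed points of the one-dimensional torus $T\subset SL_2$ (which follows from the absence of zero weights, i.e.\ no hook length vanishes). What your approach buys is uniformity with the Calogero--Moser case; what it costs is the external tangent-weight input and the smoothness machinery, where the paper gets away with the irreducibility of $\mathrm{Sym}^k(\C^2)$.
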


\begin{proof}
This follows from  \cite[Lemma 12]{KumarHilbert}. Alternatively, notice that if $y_{\lambda}$ is fixed by $GL_2$, then $\C[x,y] / I_{\lambda}$ is an $GL_2$-module. Since each graded piece of $\C[x,y]$ is an irreducible $GL_2$-module, this implies that there is some $m$ such that $I_{\lambda} = \C[x,y]_{\ge m}$ and hence $\lambda = \stair$. 
\end{proof}

We say that a partition $\lambda$ is \textit{steep} if $\lambda_1 > \cdots > \lambda_{\ell} > 0$. 

\begin{prop} \label{prop:HilbSL2}
Let $\lambda \neq \stair$ be a partition of $n$ and set $K = \Stab_{SL_2}(y_{\lambda})$. 
\begin{enumerate}
\item If $\lambda$ is steep then $K = B$, and if $\lambda^t$ is steep
  then $K = B_-$. In both cases, $\mc{O}_{\lambda} \simeq
  \mathbb{P}^1$. 

\item If neither $\lambda$ or $\lambda^t$ is steep, then $K = T$ if $\lambda \neq \lambda^t$ and $K = N(T)$ if $\lambda = \lambda^t$. In both cases the complement to $\mc{O}_{\lambda}$ in $\overline{\mc{O}_{\lambda}}$ equals $\mathbb{P}^1$.
\item The orbit $\mc{O}_{\lambda}$ is closed if and only if $\lambda$ or $\lambda^t$ is steep. 
\end{enumerate}
\end{prop}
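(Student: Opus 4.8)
The plan is to reduce the statement to the computation of $K=\Stab_{SL_2}(y_\lambda)$, determine $K$ from its Lie algebra, and then read off the orbit geometry from the lemmas of Section 2. First I would note that $y_\lambda \in (\Hilb^n\C^2)^{\Cs}$, since the monomial ideal $I_\lambda$ is stable under the full diagonal torus of $GL_2$. As observed above, the $GL_2$- and $SL_2$-orbits of such a point coincide, so $\mc{O}_\lambda = SL_2 \cdot y_\lambda \cong SL_2/K$ and everything is governed by $K$. Because $y_\lambda$ is $T$-fixed we have $T \subseteq K$, so $\mf{k} := \Lie K$ is a subalgebra of $\slt$ containing $\mf{t}$, i.e. $\mf{k} \in \{\mf{t}, \mf{b}, \mf{b}^-, \slt\}$. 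Since $\lambda \neq \stair$ the point $y_\lambda$ is not $GL_2$-fixed by the preceding lemma, hence not $SL_2$-fixed, so $K \neq SL_2$ and $\mf{k} \neq \slt$.

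The key step is to decide which of $E,F$ lies in $\mf{k}$. Writing $T_{y_\lambda}\Hilb^n\C^2 = \Hom_{\C[x,y]}(I_\lambda, \C[x,y]/I_\lambda)$, the vector field generated by $E$ vanishes at $y_\lambda$ exactly when $E \cdot I_\lambda \subseteq I_\lambda$, where $E$ acts as a derivation of $\C[x,y]$ (explicitly $y\partial_x$ in suitable coordinates). On monomials this derivation sends the box $(i,j)$ of the diagram to $(i-1,j+1)$, so the condition $E\cdot I_\lambda \subseteq I_\lambda$ says that $Y_\lambda$ is closed under the diagonal move $(a,b)\mapsto (a+1,b-1)$; an elementary check shows this holds precisely when the parts of $\lambda$ are strictly decreasing, i.e. when $\lambda$ is steep. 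By the symmetric computation for $F$ (which acts as $x\partial_y$) one gets $F \in \mf{k}$ iff $\lambda^t$ is steep. I expect this combinatorial translation to be the main obstacle: one must be careful that the move can drag a box from a high row down into the diagram, so that the naive ``consecutive rows differ by at most one'' condition is not the correct one. Note also that $E,F \in \mf{k}$ simultaneously would force $\lambda$ and $\lambda^t$ both steep, hence $\lambda = \stair$, which is excluded; thus at most one of $E,F$ lies in $\mf{k}$.

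From $\mf{k}$ I would recover $K$ via $K \subseteq N_{SL_2}(K^\circ)$. If $\lambda$ is steep then $\mf{k} = \mf{b}$, so $K^\circ = B$ and $K \subseteq N_{SL_2}(B) = B$, giving $K = B$; the case $\lambda^t$ steep gives $K = B^-$ identically, proving the stabilizer claims in (1). If neither is steep then $\mf{k} = \mf{t}$, so $K^\circ = T$ and $K \subseteq N(T)$; here $K$ equals $T$ or $N(T)$ according to whether the nontrivial element $w_0 \in N(T)$ fixes $y_\lambda$. Since $w_0$ interchanges the two coordinates and hence transposes monomial ideals, $w_0 \cdot y_\lambda = y_{\lambda^t}$ (the Hilbert-scheme analogue of Lemma \ref{lem:sgntwist}), so $w_0 \in K$ iff $\lambda = \lambda^t$; this gives $K = N(T)$ when $\lambda = \lambda^t$ and $K = T$ otherwise, as in (2).

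Finally I would read off the orbit geometry. For $K = B$ or $B^-$ we have $\mc{O}_\lambda \cong SL_2/B \cong \mathbb{P}^1$, which is complete and therefore closed in $\Hilb^n\C^2$; this finishes (1) and the forward direction of (3). For $K = T$ or $N(T)$ the orbit is not complete, and to control its closure I would apply Lemma \ref{lem:sltcomplete} on the smooth projective, $GL_2$-stable subvariety $(\Hilb^n\C^2)^{\Cs}$, which is closed in $\Hilb^n\C^2$ and hence contains $\overline{\mc{O}_\lambda}$; this yields $\overline{\mc{O}_\lambda}\setminus \mc{O}_\lambda \cong \mathbb{P}^1$, proving (2) and showing $\mc{O}_\lambda$ is not closed, which completes (3). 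The point needing care here is that $\Hilb^n\C^2$ is itself not complete, so Lemma \ref{lem:sltcomplete} must be invoked on the projective locus $(\Hilb^n\C^2)^{\Cs}$ rather than on $\Hilb^n\C^2$.
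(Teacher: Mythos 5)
Your proof is correct and follows the same overall skeleton as the paper's: identify $\mf{k}=\Lie K$ among $\mf{t},\mf{b},\mf{b}^-,\slt$, pass from $\mf{k}$ to $K$ via $N_{SL_2}(B)=B$ and $K\subseteq N(T)$, use $w_0\cdot y_\lambda=y_{\lambda^t}$ to separate $T$ from $N(T)$, and invoke Lemma \ref{lem:sltcomplete} on a complete $SL_2$-stable subvariety containing the orbit. The one genuine difference is at the key step: the paper cites \cite[Lemma 12]{KumarHilbert} for ``$\lambda$ steep $\Rightarrow B\subset K$'' and then asserts $\mf{k}\not\simeq\mf{b}$ in the non-steep case without further comment, whereas you prove the full equivalence directly by computing the tangent vector of $E$ at $I_\lambda$ as the map $I_\lambda\to\C[x,y]/I_\lambda$ induced by the derivation, and checking that $E\cdot I_\lambda\subseteq I_\lambda$ translates into closure of $Y_\lambda$ under $(a,b)\mapsto(a+1,b-1)$, i.e.\ strict decrease of the parts. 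This makes the argument self-contained and, usefully, gives both implications at once (the converse is exactly what is needed to conclude $\mf{k}=\mf{t}$ in part (2)); the combinatorial check is right as stated. The only other (immaterial) deviation is that you ensure completeness by working inside $(\Hilb^n\C^2)^{\Cs}$ rather than the punctual Hilbert scheme $\Hilb^n_0\C^2$ used in the paper; both are complete, $SL_2$-stable, and contain $\mc{O}_\lambda$, so either choice legitimizes the appeal to Lemma \ref{lem:sltcomplete}.
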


\begin{proof}
If $\lambda$ is steep then \cite[Lemma 12]{KumarHilbert} shows that $B \subset K$. If $\dim K > \dim B$, then $\dim K = 3$ i.e. $K = SL_2$ and $\lambda = \stair$ (notice that $\stair$ is the only partition such that both $\lambda$ and $\lambda^t$ are steep). Therefore $\dim B = \dim K$ and hence $K^{\circ} = B$. But then $N_{SL_2}(B) = B$ implies that $K = B$. Since $y_{\lambda^t} = w_0 \cdot y_{\lambda}$, if $\lambda^t$ is steep then $K = w_0 B w_0^{-1} = B_-$. This proves part (1). 

Assume now that neither $\lambda$ nor $\lambda^t$ are steep. Let $\Lie K = \mf{k}$. Since $\mf{k} \supset \mf{t}$, but $\mf{k} \not\simeq \mf{b}, \slt$, we have $\mf{k} = \mf{t}$ and hence $K = T$ or $N(T)$. Then part (2) follows from Lemma  \ref{lem:sltcomplete}. Notice that  Lemma  \ref{lem:sltcomplete} is applicable here even though $\Hilb^n \C^2$ is not complete; this is because $\mc{O}_{\lambda}$ is contained in the punctual Hilbert scheme $\Hilb^n_0 \C^2 \subset \Hilb^n \C^2$ of all ideals supported at $0 \in \C^2$. This $SL_2$-stable subvariety is complete.   
  
Part (3) follows directly from parts (1) and (2). 
\end{proof}

\begin{question}
For which $\lambda$ is $\overline{\mc{O}}_{\lambda}$ normal?
\end{question}

Associate to a partition $\lambda$ diagonals $d_k := | \{ (i,j) \in Y_{\lambda} \ | \ i+ j = k \}|$, where $k = 0,1,\ds$. For instance, if $\lambda = (4,3,3,1,1)$, then the diagonals $(d_0,d_1,\ds)$ are $(1,2,3,4,2)$. Now construct a new partition $U(\lambda)$ from $\lambda$ by setting $U(\lambda)_i = | \{ d_k \ | \ d_k \ge i \} |$. It is again a partition of $|\lambda|$. Pictorially, we if we visualize the Young tableau $Y_{\lambda}$ in the English style, as in (\ref{eq:Young}), then on the $k$th diagonally (where there are $d_k$ boxes), we have simply moved all boxes as far to the top-right as possible. E.g. $U(4,3,3,1,1) = (5,4,2,1)$. If instead we move all boxes on the $k$th diagonally as far to the bottom left as possible, we get $U(\lambda)^t$. 

\begin{lem}
Let $\lambda$ be a partition. 
\begin{enumerate}
\item The partition $U(\lambda)$ is steep and $U(\lambda) = \lambda$ if and only if $\lambda$ is steep. 
\item $U(\lambda) = \stair$ if and only if $\lambda = \stair$. 

\end{enumerate}
\end{lem}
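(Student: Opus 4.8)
The plan is to reduce both parts to one combinatorial inequality controlling how the anti-diagonal lengths $d_k$ of a Young diagram can grow. Using the paper's convention $Y_\lambda=\{(i,j):0\le j\le \ell(\lambda)-1,\ 0\le i\le \lambda_j-1\}$, so that row $j$ has length $\lambda_j$ and the lengths weakly decrease, I would first prove the increment bound $d_{k+1}\le d_k+1$. The argument is a counting one: split the cells $(i,j)$ on the anti-diagonal $i+j=k+1$ according to whether $j\ge 1$ or $j=0$. Since $Y_\lambda$ is closed under moving up one row, the assignment $(i,j)\mapsto (i,j-1)$ injects the cells with $j\ge 1$ into the anti-diagonal $i+j=k$, while there is at most one cell with $j=0$; hence $d_{k+1}\le d_k+1$. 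Combined with $d_0=1$, a running-maximum argument then shows that the values $\{d_k\}$ contain every integer between $1$ and $D:=\max_k d_k$ (the running maximum starts at $1$, rises by at most $1$ per step, and so realizes each intermediate value).

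This already gives the first assertion. Recalling that $U(\lambda)_i=|\{k:d_k\ge i\}|$, so that $U(\lambda)$ is the conjugate of the partition obtained by sorting $(d_0,d_1,\dots)$, one has $U(\lambda)_i-U(\lambda)_{i+1}=|\{k:d_k=i\}|\ge 1$ for every $1\le i\le D$; thus $U(\lambda)$ is steep. For the equivalence $U(\lambda)=\lambda\Leftrightarrow\lambda$ steep, one direction is immediate: if $U(\lambda)=\lambda$ then $\lambda$ is steep, being equal to the steep partition $U(\lambda)$. For the converse I would show directly that $U$ fixes every steep partition: when $\lambda$ is steep the quantity $\lambda_j+j$ is weakly decreasing in $j$, so the set of rows meeting a fixed anti-diagonal, namely $\{j:\lambda_j+j>k\}\cap\{j:j\le k\}$, is an initial segment $\{0,1,\dots,d_k-1\}$. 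This says precisely that the cells of each anti-diagonal of $\lambda$ already occupy the top rows, i.e. are as far to the top-right as possible, so the rearrangement defining $U$ changes nothing and $U(\lambda)=\lambda$.

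For part (2) the backward implication is free: $\stair$ is steep, so $U(\stair)=\stair$ by part (1). For the forward implication I would exploit that $\stair$ is self-conjugate. If $U(\lambda)=\stair$, then since $U(\lambda)$ is the conjugate of the sorted diagonal sequence $\mu$, we get $\mu=\stair^t=\stair$; in particular $\lambda$ has exactly $m$ nonempty anti-diagonals and $|\lambda|=\binom{m+1}{2}$. Because the nonempty anti-diagonals form an initial segment $\{0,1,\dots,m-1\}$ (a cell on anti-diagonal $k+1$ forces one on anti-diagonal $k$, by the same up/left-closedness used above) and because each $d_k\le k+1$, the identity $\binom{m+1}{2}=\sum_{k=0}^{m-1}d_k\le\sum_{k=0}^{m-1}(k+1)=\binom{m+1}{2}$ forces $d_k=k+1$ for all $k$. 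Thus every anti-diagonal is completely filled, meaning $\lambda=\{(i,j):i+j\le m-1\}=\stair$.

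The only genuine content is the increment inequality $d_{k+1}\le d_k+1$ together with its corollary that $\{d_k\}$ has no gaps below its maximum; once these are in hand, everything else is bookkeeping about conjugation and anti-diagonals. The point requiring care is the interaction between the two descriptions of $U$ (the closed formula $U(\lambda)_i=|\{k:d_k\ge i\}|$ versus the pictorial ``push each anti-diagonal to the top-right''), in particular verifying that the initial-segment structure of the anti-diagonals of a steep partition matches exactly the top-right normal form produced by $U$, so that the two descriptions agree on steep partitions.
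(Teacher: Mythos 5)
Your proof is correct, and it is rather more careful than the paper's own argument, which disposes of both parts with a terse pictorial sketch ("one can move the box at the end of the $i$th row further up and to the right on the diagonal it belongs to"; "Part (2) is also immediate from the construction"). The substantive difference is your choice of key lemma: the increment bound $d_{k+1}\le d_k+1$ together with $d_0=1$, which forces the multiset $\{d_k\}$ to contain every value between $1$ and its maximum, whence $U(\lambda)_i-U(\lambda)_{i+1}=|\{k: d_k=i\}|\ge 1$. The paper instead argues by contradiction from the pictorial "push to the top-right" description (if two rows of $U(\lambda)$ had equal length, some box could still be moved). What your route buys is that it works entirely from the closed formula $U(\lambda)_i=|\{k:d_k\ge i\}|$, which is the actual definition in the text, and so sidesteps the unproved compatibility of that formula with the pictorial description (namely, that pushing each anti-diagonal to the top-right really produces a left-justified Young diagram); you only verify this compatibility where you need it, for steep $\lambda$, via the monotonicity of $\lambda_j+j$. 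Your part (2), using that $\stair$ is self-conjugate so the sorted diagonal sequence must be $(m,m-1,\dots,1)$, combined with $d_k\le k+1$ and the total count, is likewise a clean substitute for the paper's appeal to the (anti-diagonal-preserving) pictorial construction. No gaps.
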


\begin{proof}
It is clear from the construction that $U(\lambda)$ is steep; if $\lambda_{i-1} = \lambda_{i}$ for some $i$ then one can move the box at the end of $i$th row further up and to the right on the diagonal that it belongs to. Similarly, if $\lambda$ is steep, then $\lambda_{i-1} > \lambda_i$ for all $i$ such that $\lambda_i \neq 0$ implies that there is always a box ``above and to the right'' of a given box i.e. if $(i,j) \in Y_{\lambda}$ and $i \neq 0$ then $(i-1,j+1) \in Y_{\lambda}$ (this can be viewed as an alternative definition of steep). 

Part (2) is also immediate from the construction. 
\end{proof}


\begin{prop}
Let $\lambda$ be a partition such that neither $\lambda$ nor $\lambda^t$ is steep then $\overline{\mc{O}_{\lambda}} = \mc{O}_{\lambda} \sqcup  \mc{O}_{U(\lambda)}$.
\end{prop}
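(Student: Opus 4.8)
The plan is to identify the unique closed orbit in the boundary $\overline{\mc{O}_{\lambda}} \smallsetminus \mc{O}_{\lambda}$ and show it is exactly $\mc{O}_{U(\lambda)}$. By Proposition \ref{prop:HilbSL2}(2), since neither $\lambda$ nor $\lambda^t$ is steep, the orbit $\mc{O}_{\lambda}$ is two-dimensional with $\overline{\mc{O}_{\lambda}} \smallsetminus \mc{O}_{\lambda} \simeq \mathbb{P}^1$, a single homogeneous orbit $\simeq SL_2/B$. By Lemma \ref{lem:onehomo} every one-dimensional homogeneous $SL_2$-space is $SL_2/B$, so this boundary orbit is a one-dimensional orbit, which by Proposition \ref{prop:HilbSL2}(1) must be $\mc{O}_{\mu}$ for a partition $\mu$ with $\mu$ or $\mu^t$ steep. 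Since $\mc{O}_{\lambda} \subset \Hilb^n_0 \C^2$ lies in the complete punctual Hilbert scheme, the whole closure does too, and the boundary consists of $T$-fixed points $y_{\mu}$. The goal thus reduces to pinning down which partition $\mu$ labels the $T$-fixed point lying in this boundary $\mathbb{P}^1$.

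First I would describe the boundary geometrically using the torus action. The closure $\overline{\mc{O}_{\lambda}}$ contains the $T$-fixed point $y_{\lambda}$ and its $\mathbb{P}^1$-boundary also contains $T$-fixed points. To find them, I would take a generic one-parameter subgroup of $T$ and compute the limit of points of $\mc{O}_{\lambda}$. The key observation is that the unipotent flow of $B$ acting on the monomial ideal $I_{\lambda}$ degenerates it by ``sliding boxes along diagonals.'' Concretely, the Borel $B$ of upper-triangular matrices acts on $\C[x,y]$ preserving the total degree $i+j$, so its action on $\Hilb^n \C^2$ preserves each diagonal count $d_k$. The limit under the $B$-flow (equivalently the $T$-fixed point in the closure reachable within the boundary orbit) is the monomial ideal whose standard basis has been pushed as far to the top-right along each diagonal as possible — which is precisely the definition of $U(\lambda)$. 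I would verify this by an explicit local computation in a $T$-stable affine chart around $y_{\lambda}$: the tangent directions of $\mc{O}_{\lambda}$ are the images of $E$ and $F$, and following the $E$-flow transports boxes up-right along diagonals. This identifies the $T$-fixed point in the boundary as $y_{U(\lambda)}$.

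Next I would confirm that $U(\lambda)$ is the correct label for the closed boundary orbit and that this orbit is genuinely one-dimensional (not a fixed point). By the preceding Lemma, $U(\lambda)$ is always steep, and $U(\lambda) = \stair$ only when $\lambda = \stair$; since we assume $\lambda \neq \stair$, we have $U(\lambda) \neq \stair$, so $y_{U(\lambda)}$ is not $SL_2$-fixed and its orbit $\mc{O}_{U(\lambda)}$ is one-dimensional. Because $U(\lambda)$ is steep, Proposition \ref{prop:HilbSL2}(1) gives $\Stab_{SL_2}(y_{U(\lambda)}) = B$, so $\mc{O}_{U(\lambda)} \simeq SL_2/B \simeq \mathbb{P}^1$ is closed. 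Since the boundary is a single such orbit, and it contains the $T$-fixed point $y_{U(\lambda)}$, it must equal $\mc{O}_{U(\lambda)}$, giving $\overline{\mc{O}_{\lambda}} = \mc{O}_{\lambda} \sqcup \mc{O}_{U(\lambda)}$ as claimed.

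The main obstacle I anticipate is the explicit limit computation identifying the boundary $T$-fixed point with $y_{U(\lambda)}$: one must show rigorously that following the $SL_2$-action degenerates $I_{\lambda}$ to $I_{U(\lambda)}$ and not to some other partition with the same diagonal data. The combinatorial content — that pushing boxes to the top-right along diagonals yields a steep partition and corresponds to the $B$-limit — is intuitively clear from the degree-preserving action of $B$, but making the scheme-theoretic limit precise requires care with the local coordinates on $\Hilb^n \C^2$ near $y_{\lambda}$ and checking that the limiting ideal is exactly the monomial ideal $I_{U(\lambda)}$. One should also rule out the possibility that the boundary orbit meets a second $T$-fixed point (the other ``corner'' of the $\mathbb{P}^1$); this is handled by noting that $\mc{O}_{U(\lambda)} \simeq SL_2/B$ contains a unique $T$-fixed point, consistent with the single closed boundary orbit. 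I would double-check the diagonal-preservation claim by confirming the $E$- and $F$-weight structure on the tangent space matches the hook-length weights, tying back to the weight computations underlying the earlier lemmas.
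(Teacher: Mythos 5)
Your overall skeleton is the same as the paper's: reduce via Proposition \ref{prop:HilbSL2}(2) and Lemma \ref{lem:sltcomplete} to the statement that the boundary is a single closed orbit $\mc{O}_{\mu}\simeq SL_2/B$ with $\mu$ steep, observe that the $SL_2$-action preserves the grading on $\C[x,y]$ and hence the diagonal counts $d_k$, and then argue that this forces $\mu = U(\lambda)$. The gap is in the last step, and you flag it yourself: you never actually prove that the boundary $T$-fixed point is $y_{U(\lambda)}$ rather than some other partition with the same diagonal data. Your proposed mechanism --- an explicit flat-limit computation of the unipotent flow ``sliding boxes along diagonals'' in a local chart near $y_{\lambda}$ --- is not carried out, and it is genuinely delicate: the na\"ive limit of generators of $u(t)\cdot I_{\lambda}$ does not compute the flat limit in $\Hilb^n\C^2$ (one must saturate the graded pieces), so the ``sliding boxes'' picture is an assertion, not an argument. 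As written, the proof of the one claim that actually needs proving is missing.

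The paper closes this gap without any limit computation, and you already have all the ingredients on the page. Apply the Hilbert--Mumford criterion to produce a $T$-fixed point $J=\lim_{t\to 0}t\cdot I$ in the boundary for some $I\in\mc{O}_{\lambda}$; then $J=I_{\mu}$ or $I_{\mu^t}$, and without loss of generality $J=I_{\mu}$, with $\dim (I_{\mu})_k = k+1-d_k$ inherited from $\mc{O}_{\lambda}$. Now use steepness structurally rather than only to conclude closedness: since $\mu$ is steep, $I_{\mu}$ is $B$-stable by Proposition \ref{prop:HilbSL2}(1), so each $(I_{\mu})_k$ is a $B$-submodule of the irreducible $SL_2$-module $\C[x,y]_k$, hence is the \emph{unique} $B$-submodule of its dimension (the $B$-submodules of $\C[x,y]_k$ form a full flag). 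This pins down $(\C[x,y]/I_{\mu})_k$ as the span of $x^k, x^{k-1}y,\dots,x^{k+1-d_k}y^{d_k-1}$, i.e.\ forces the boxes of $Y_{\mu}$ on each diagonal to sit in the top-right positions, which is exactly the defining property of $U(\lambda)$. Replacing your unproven limit computation with this uniqueness argument turns your sketch into a complete proof.
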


\begin{proof}
Grade $\C[x,y]$ by putting $x$ and $y$ in degree one. Then every $I \in \mc{O}_{\lambda}$ is graded, $I = \bigoplus_{k \ge 0} I_k$ and $\dim I_k$ is independent of $I$. Since $\dim (I_{\lambda})_k = k + 1 - d_k$, we deduce that $\dim I_k = k+1-d_k$ for all $I \in \mc{O}_{\lambda}$.  By Proposition \ref{prop:HilbSL2} (2) and Lemma \ref{lem:sltcomplete}, we know that $\overline{\mc{O}_{\lambda}} = \mc{O}_{\lambda} \sqcup  \mc{O}'$, where $\mc{O}' \simeq SL_2 / B$. Thus, there exists a steep partition $\mu \neq \stair$ such that $\mc{O}' = \mc{O}_{\mu}$.  

 The Hilbert-Mumford criterion implies that there exists some $I \in
 \mc{O}_{\lambda}$ such that $J = \lim_{t \rightarrow 0} t \cdot I$ is a
 $T$-fixed point in $\mc{O}_{\mu}$. Thus, either $J = I_{\mu}$ or $J =
 I_{\mu^t}$. Without loss of generality, $J = I_{\mu}$. This implies
 that $\dim (I_{\mu})_k = k + 1 - d_k$. Since $\mu$ is steep,
 $(I_{\mu})_{k}$ is a $B$-submodule of $\C[x,y]_k$, cf. Proposition
\ref{prop:HilbSL2} (1). Therefore,  $\{ x^k, x^{k-1} y, \ds, x^{k+1 - d_k} y^{d_k-1} \}$ is a basis of $(\C[x,y]  / I_{\mu})_k$ i.e. $\{ (i,j) \in Y_{\mu} \ | \ i+ j = k \}$ equals $\{ (k,0),(k-1,1), \ds, (k+1-d_{k},d_k - 1) \}$. But $U(\lambda)$ is uniquely defined by this property. Hence $\mu = U(\lambda)$.

\end{proof}

\begin{rem}
For any (homogeneous) ideal $I \in (\Hilb^n \C^2)^{\Cs}$, $I$ is fixed by $B$ if and only if each $I_k$ is a $B$-submodule of $\C[x,y]_k$. But the $B$-submodules of $\C[x,y]_k$ are the same as the $U$-submodules of $\C[x,y]_k$. This implies that $I$ is $B$-fixed if and only if it is $U$-fixed. 
\end{rem}

It is known, see eg \cite[Theorem 5.6]{GordonSmith},  that the Hilbert scheme fits into a flat family $p : \mf{X} \rightarrow \mathbb{A}^1$ such that $p^{-1}(0) \simeq \Hilb^n \C^2$ and $p^{-1}(\bc) \simeq \cX_{\bc}$ for $\bc \neq
0$. Moreover, $SL_2$ acts on $\mf{X}$ such that the map $p$ is equivariant, with $SL_2$ acting trivially on $\C$. The identification of the fibers is also equivariant. The set-theoretic fixed point set $\mf{X}^T$ decomposes
$$
\mf{X}^T = \bigsqcup_{\lambda \vdash n} \mathbb{A}_{\lambda}, 
$$
into a union of connected components $\mathbb{A}_{\lambda}$, where $\mathbb{A}_{\lambda} \simeq \mathbb{A}^1$ with $p^{-1}(\bc) \cap \mathbb{A}_{\lambda}= \{ x_{\lambda} \}$ for $\bc \neq 0$ and $p^{-1}(0) \cap \mathbb{A}_{\lambda} = \{ y_{\lambda} \}$. The only thing that is not immediate here is that the parameterization of the fixed points in $\cX_{\bc}$ match those of $\Hilb^n \C^2$. But this can be seen from Lemma \ref{lem:Atang}, \cite[Lemma 5.4.5]{HaimanSurvey} and the fact that a partition is uniquely defined by its hook polynomial. 

Then the $SL_2$-varieties $SL_2 \cdot \mathbb{A}_{\lambda}$ are connected. Assume that neither $\lambda$ nor $\lambda^t$ is steep. Then there are equivariant trivializations 
$$
SL_2 \cdot \mathbb{A}_{\lambda} \simeq SL_2 / N(T) \times \mathbb{A}^1 \quad \textrm{or} \quad SL_2 \cdot \mathbb{A}_{\lambda} \simeq SL_2 /T \times \mathbb{A}^1,
$$
depending on whether $\lambda = \lambda^t$ or not. 

Let $\widetilde{\slt} \rightarrow \slt$ be Grothendieck's simultaneous
resolution and write $\varpi$ for the composition
$\widetilde{\slt} \rightarrow \slt\to \slt/\!/SL_2\cong \mathbb{A}^1$,
where the second map is $a\mto \frac{1}{2}\Tr a$.

\begin{conj}
Let $\lambda \neq \stair$ be a steep partition. There exists a
$SL_2$-equivariant embedding $\widetilde{\slt} \hookrightarrow \mf{X}$
sending the $B$-fixed point $[1:0] \in \mathbb{P}^1 \subset
\widetilde{\slt}$ to $y_{\lambda}$ and such that the following
diagram commutes
$$\xymatrix{
\widetilde{\slt}\ar[dr]_<>(0.5){\varpi}
 \ar@{^{(}->}[rr] && \mf{X}\ar[dl]^<>(0.5){p}\\
&\mathbb{A}^1&
}
$$
\end{conj}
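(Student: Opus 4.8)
The plan is to realise $\widetilde{\slt}$ as the closure $Z_\lambda:=\overline{SL_2\cdot\mathbb{A}_\lambda}\subset\mf{X}$ of the $SL_2$-sweep of the fixed-point section $\mathbb{A}_\lambda\cong\mathbb{A}^1$, and to write down an explicit $SL_2$-equivariant isomorphism $\widetilde{\slt}\iso Z_\lambda$ over $\mathbb{A}^1$. First I would record the geometry of $Z_\lambda$. It is an irreducible $SL_2$-stable closed threefold and $p|_{Z_\lambda}$ is dominant, so its generic fibre is two-dimensional; for $\bc\neq0$ that fibre is the closed orbit $SL_2\cdot x_\lambda\cong SL_2/T$ of Lemma \ref{lem:CMstab}, which by Lemma \ref{lem:sgntwist} contains \emph{both} $T$-fixed points $x_\lambda$ and $x_{\lambda^t}$ (these are distinct since a steep $\lambda\neq\stair$ is never self-transpose). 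Over $\bc=0$ the fibre is a two-dimensional $SL_2$-stable subvariety of $\Hilb^n\C^2$ whose unique one-dimensional orbit is $\mc{O}_\lambda\cong SL_2/B$, by Proposition \ref{prop:HilbSL2}(1). Thus $Z_\lambda$ fails to be the product $SL_2/T\times\mathbb{A}^1$ precisely along $\bc=0$, and the degeneration of the generic orbit $SL_2/T$ into the complete orbit $SL_2/B$ is exactly the behaviour of the Grothendieck--Springer family $\varpi\colon\widetilde{\slt}\to\mathbb{A}^1$, whose central fibre is $T^*\mathbb{P}^1$ with zero section $SL_2/B$.

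Next I would build the comparison map from the presentation $\widetilde{\slt}=SL_2\times_B\mathfrak{b}$ with $\mathfrak{b}=\C H\oplus\C E$: an $SL_2$-equivariant morphism $\Phi\colon\widetilde{\slt}\to\mf{X}$ amounts to a $B$-equivariant morphism $\phi\colon\mathfrak{b}\to\mf{X}$ through $\Phi([g,v])=g\cdot\phi(v)$. Normalise the base of $\varpi$ so that $\varpi$ restricts to an isomorphism on the fixed section $\{(cH,\mathfrak{b}):c\in\C\}$ (the eigenvalue coordinate of the Grothendieck map, $\varpi(cH+uE)=c$); this is what makes $p\circ\Phi=\varpi$ compatible with the parametrisations of $\mathbb{A}_\lambda$ and $\mathbb{A}_{\lambda^t}$. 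On the Cartan direction set $\phi(cH)$ to be the point of $\mathbb{A}_\lambda$ over $c$, so $\phi(0)=y_\lambda$ and $\Phi([1:0])=y_\lambda$. Since $cH+uE=\Ad(\exp(-\tfrac{u}{2c}E))(cH)$ for $c\neq0$, equivariance forces
\[
\phi(cH+uE)=\exp\!\big(-\tfrac{u}{2c}E\big)\cdot\phi(cH),\qquad c\neq0,
\]
and one checks directly that this $\phi$ is $B$-equivariant on $\{c\neq0\}$, that $p\circ\Phi=\varpi$ there, and that $\Phi$ is the identity on the generic orbit.

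It then remains to extend $\Phi$ across the divisor $\{c=0\}$ and to prove the extension is a closed immersion. As $\widetilde{\slt}$ is smooth, equivariance localises the problem to a neighbourhood of the fixed section, and I would extend $\phi$ by identifying the limit $\lim_{c\to0}\exp(-\tfrac{u}{2c}E)\cdot\phi(cH)$ inside $\Hilb^n\C^2$, using that the orbits in question lie in the complete punctual Hilbert scheme $\Hilb^n_0\C^2$, together with the flat family of \cite{GordonSmith}, Iarrobino's description \cite{Iarrobino} of homogeneous ideals, and the fact that for steep $\lambda$ each graded piece $(I_\lambda)_k$ is a $B$-submodule of $\C[x,y]_k$ (Proposition \ref{prop:HilbSL2}(1) and the subsequent Remark). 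Granting the extension, injectivity of $\Phi$ is checked orbit-by-orbit (an isomorphism on the generic fibre $SL_2/T$ and on the zero section $SL_2/B\iso\mc{O}_\lambda$), while the immersion property reduces by equivariance to injectivity of $d\Phi$ at the single point mapping to $y_\lambda$, which is read off the $SL_2$-weight decomposition of $T_{y_\lambda}\Hilb^n\C^2$. Since the source is smooth, a closed immersion identifies $Z_\lambda$ with $\widetilde{\slt}$, in particular showing $Z_\lambda$ is normal.

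The hard part will be precisely these last two reductions, which are genuinely linked. Extending $\Phi$ across $\{c=0\}$ is delicate because $p$ is not proper---the fibres $\cX_{\bc}$ are affine---so the limit defining $\phi$ on the central fibre need not exist a priori; controlling it is tantamount to identifying the central fibre of $Z_\lambda$ with $T^*\mathbb{P}^1$, i.e. to the normality of $Z_\lambda$ along $\{c=0\}$. The pattern of Lemma \ref{lem:sltcomplete} makes clear that normality cannot be bypassed: for orbit closures one gets for free only a \emph{finite} equivariant cover, which is an isomorphism exactly when the target is normal---and here the same issue must be resolved in the relative setting over $\mathbb{A}^1$. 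This is the phenomenon underlying the open normality Question stated above, and I expect a successful argument to proceed by writing $\Phi$ in Iarrobino's coordinates and verifying smoothness of $Z_\lambda$ directly, thereby settling the extension, the immersion, and the normality at once.
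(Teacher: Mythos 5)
The statement you are addressing is labelled a \emph{conjecture} in the paper; the authors give no proof of it, so there is nothing to compare your argument against, and the only question is whether your proposal actually closes the problem. It does not. Your strategy --- take $Z_\lambda=\overline{SL_2\cdot\mathbb{A}_\lambda}$, define $\Phi$ on the locus $c\neq0$ by equivariance from the fixed-point section using $cH+uE=\mathrm{Ad}(\exp(-\tfrac{u}{2c}E))(cH)$, and then extend over $c=0$ --- is the natural one, and your preliminary observations are correct (a steep $\lambda\neq\stair$ is never self-conjugate, so the generic fibre is the closed orbit $SL_2/T$ of Lemma \ref{lem:CMstab}, and the central fibre degenerates to $SL_2/B$ by Proposition \ref{prop:HilbSL2}(1)). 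But the two steps you yourself flag as ``the hard part'' --- existence of the limit $\lim_{c\to0}\exp(-\tfrac{u}{2c}E)\cdot\phi(cH)$ in the non-proper family $p$, and the closed-immersion/normality statement along the central fibre --- are precisely the content of the conjecture, and you offer no argument for either beyond the expectation that a computation in Iarrobino's coordinates would settle them. As you note, normality of $\overline{\mc{O}_\lambda}$ is itself posed as an open Question in the paper, and Lemma \ref{lem:sltcomplete} only yields a finite equivariant cover that is an isomorphism \emph{if} the target is normal; so the proposal reduces the conjecture to an open problem rather than solving it.

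One smaller point to pin down if you pursue this: your normalisation $\varpi(cH+uE)=c$ is the Grothendieck map $\widetilde{\slt}\to\t\cong\mathbb{A}^1$, whereas the paper defines $\varpi$ as the composition through the adjoint quotient $\slt\to\slt/\!/SL_2$ (the displayed formula $a\mapsto\tfrac12\Tr a$ is visibly a typo, since traces vanish on $\slt$, but the intended invariant is quadratic in $c$). These two maps differ by a degree-two map of the base, and the generic fibre of the composition through $\slt/\!/SL_2$ is a disjoint union of \emph{two} copies of $SL_2/T$ rather than one; you need to decide which $\varpi$ the conjecture intends and match it against the parametrisation of the sections $\mathbb{A}_\lambda$ and $\mathbb{A}_{\lambda^t}$, which are distinct components of $\mf{X}^T$ even though $\mc{O}_\lambda=\mc{O}_{\lambda^t}$ in each fibre. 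This does not invalidate your outline, but it is exactly the kind of bookkeeping that must be fixed before the extension over $c=0$ can even be formulated.
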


\subsection{The Procesi bundle}\label{sec:Procesi} The Procesi bundle $\mc{P}$ on $\Hilb^n \C^2$ is a $GL_2 \times \s_n$-equivariant vector bundle of rank $n !$. See \cite{HaimanSurvey}, and references therein, for details. The fiber $\mc{P}_{\stair}$ is a $GL_2 \times \s_n$-module, decomposing as 
$$
\mc{P}_{\stair} = \bigoplus_{\mu \vdash n} V_{\mu} \o \pi_{\mu}. 
$$
As $GL_2$-modules, we have a decomposition $V_{\mu} = \bigoplus_i V (m_{i},n_{i})$ into a direct sum of irreducible $GL_2$-modules $V(m_{i},n_{i})$ with highest weight $(m_{i},n_{i})$; here $m_{i},n_{i} \in \Z$, with $m_{i} \ge n_{i}$. We call $(m_{1},n_{1}), (m_{2},n_{2}), \ds $ the \textit{graded exponents} of $\mu$. Let $H$ denote the $2$-torus of diagonal matrices in $GL_2$. The character of $V_{\mu}$ is given by the cocharge Kostka-Macdonald polynomial,
\begin{equation}\label{eq:characterco}
\chi_H(V_{\lambda}) = \widetilde{K}_{\lambda,\stair}(q,t). 
\end{equation}
Notice that this implies $\widetilde{K}_{\lambda,\stair}(q,t) = \widetilde{K}_{\lambda,\stair}(t,q)$. This can also be deduced directly from the definition of Macdonald polynomials e.g. \cite[Proposition 3.5.10]{HaimanSurvey}. Similarly, equation (\ref{eq:characterco}), together with standard properties \cite[Proposition 3.5.12]{HaimanSurvey} of Macdonald polynomials imply that 
$$
V_{\lambda^t} \simeq V_{\lambda}^* \otimes \mathrm{det}^{\otimes n(\stair)}.
$$
Thus, if the exponents of $\lambda$ are $(m_1,n_1),\ds $ then the exponents of $\lambda^t$ are
\[
(n(\stair) - n_1, n(\stair) - m_1), \ds
\]

\begin{question}
Is there an explicit formula for the graded exponents of $\lambda$?
\end{question} 


Next we explain how Lemma \ref{lem:exponentduality} and Proposition \ref{schur} can be deduced from the statements of section \ref{sec:Procesi}, \textit{provided} one uses Haiman's $n!$ Theorem. 

Let $u$ be a formal variable and $\H_{u \bc}$ the flat $\C[u]$-algebra such that $\H_{u \bc} / \langle u \rangle \simeq \H_0$ and $\H_{u \bc} / \langle u - 1 \rangle \simeq \H_{\bc}$. By \cite[Theorem 5.5]{GordonSmith}, the space $\mf{X}$ can be identified with a moduli space of $\lambda$-stable $\H_{u \bc}$-modules $L$ such that $L |_{\s_n} \simeq \C \s_n$. Here $\lambda$ is a generic stability parameter; see \textit{loc. cit.} for definitions. As such, $\mf{X}$ comes equipped with a canonical bundle $\widetilde{\mc{P}}$ such that each fiber is a $\H_{u \bc}$-module. The action of $SL_2$ on $\mf{X}$ lifts to $\widetilde{\mc{P}}$. 

\begin{thm}
For $\bc \neq 0$, $\widetilde{\mc{P}} |_{p^{-1}(\bc)} \simeq \mc{R}$ and $\widetilde{\mc{P}} |_{p^{-1}(0)} \simeq \mc{P}$. 
\end{thm}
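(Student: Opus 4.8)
The plan is to obtain both isomorphisms by specializing the universal family on $\mf{X}$. By \cite[Theorem 5.5]{GordonSmith}, $\widetilde{\mc{P}}$ is the tautological bundle of the moduli problem, so the fiber of $\widetilde{\mc{P}}$ over a point is the $\H_{u\bc}$-module that the point represents, and $\widetilde{\mc{P}}$ enjoys the universal property that any flat family of $\lambda$-stable $\H_{u\bc}$-modules with constant $\s_n$-type $\C\s_n$, parametrized by a scheme $S$, is the pullback of $\widetilde{\mc{P}}$ along the induced classifying map $S\to\mf{X}$. Both assertions will follow by exhibiting $\mc{R}$ and $\mc{P}$ as such families whose classifying maps are the identity on $p^{-1}(\bc)$, resp.\ $p^{-1}(0)$.

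I would first treat the Calogero-Moser fiber, where $\H_{u\bc}/\langle u-1\rangle\simeq\H_{\bc}$ and $p^{-1}(\bc)\simeq\cX_{\bc}=\Spec Z(\H_{\bc})$. Since $\cX_{\bc}$ is smooth, $\H_{\bc}$ is Azumaya over its center $Z(\H_{\bc})\simeq e\H_{\bc}e$, so over a point $x$ with maximal ideal $\mf{m}_x$ the algebra $\H_{\bc}/\mf{m}_x\H_{\bc}$ is a matrix algebra whose unique simple module is $\mc{R}_x=\H_{\bc}e/\mf{m}_x\H_{\bc}e$, and $\mc{R}_x|_{\s_n}\simeq\C\s_n$. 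Thus the sheaf $\mc{R}=\H_{\bc}e$ is a family of $\lambda$-stable $\H_{\bc}$-modules over $\cX_{\bc}$ whose classifying map $\cX_{\bc}\to p^{-1}(\bc)\simeq\cX_{\bc}$ sends $x$ to $x$, i.e.\ is the identity. By the universal property, $\widetilde{\mc{P}}|_{p^{-1}(\bc)}\simeq\mc{R}$, and the identification is $SL_2\times\s_n$-equivariant.

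I would then treat the Hilbert-scheme fiber, where $\H_{u\bc}/\langle u\rangle\simeq\H_0=\C[\h\oplus\h^*]\rtimes\s_n=\C[x,y]\rtimes\s_n$ and $p^{-1}(0)\simeq\Hilb^n\C^2$; here the universal fibers are $\H_0$-modules $L$ with $L|_{\s_n}\simeq\C\s_n$. By Haiman's $n!$ Theorem the isospectral Hilbert scheme $X_n$ is Cohen-Macaulay and $\rho\colon X_n\to\Hilb^n\C^2$ is finite flat of degree $n!$; consequently $\mc{P}=\rho_*\mc{O}_{X_n}$ is a vector bundle of rank $n!$ carrying the tautological $\C[x,y]\rtimes\s_n=\H_0$-action (the $x_i,y_i$ acting through the coordinates on the factor $(\C^2)^n$) with $\mc{P}_I|_{\s_n}\simeq\C\s_n$. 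Granting that these fibers are $\lambda$-stable, $\mc{P}$ is again a family of the required kind whose classifying map $\Hilb^n\C^2\to p^{-1}(0)\simeq\Hilb^n\C^2$ is the identity, so the universal property yields $\widetilde{\mc{P}}|_{p^{-1}(0)}\simeq\mc{P}$. Equivariance under $SL_2$ is automatic, since the $SL_2$-action is built into $\H_{u\bc}$ and $\widetilde{\mc{P}}$ compatibly with its action on $\mc{R}$ and on $\mc{P}$.

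The main obstacle is the Hilbert-scheme step: verifying that the Procesi bundle really is the tautological family of $\H_0$-modules. This is precisely where Haiman's $n!$ Theorem is indispensable, as it is what guarantees both that $\mc{P}=\rho_*\mc{O}_{X_n}$ is a genuine vector bundle of rank $n!$ (and not merely a coherent sheaf) and that its fibers are the $\lambda$-stable $\H_0$-modules with $\s_n$-type $\C\s_n$ classified by $\mf{X}$; this matches the moduli-theoretic description of $\Hilb^n\C^2$ used in \cite[Theorem 5.5]{GordonSmith}. By contrast, the Calogero-Moser step is essentially formal once the Azumaya description of $\mc{R}=\H_{\bc}e$ is in hand.
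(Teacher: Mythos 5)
Your proposal is correct and follows essentially the same route as the paper, whose proof simply cites \cite[Section 3]{EG} for the Azumaya/Calogero--Moser identification and \cite[Theorem 5.3]{GordonSmith} (resting on Haiman's $n!$ theorem) for the Hilbert scheme fiber; you have merely unpacked the content of those citations, including the one genuine subtlety you flag, namely that the $n!$ theorem is what makes $\mc{P}$ a rank-$n!$ bundle whose fibers are the $\lambda$-stable $\H_0$-modules classified by $\mf{X}$.
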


\begin{proof}
The first claim follows from \cite[Section 3]{EG} and the second is a consequence of Haiman's proof of the $n !$-conjecture; see the proof of \cite[Theorem 5.3]{GordonSmith} and references therein. 
\end{proof}

\begin{cor}
As $\s_n \times SL_2$-modules, $\mc{R}_{\stair} \simeq \mc{P}_{\stair}$ and hence $\chi_T(V_{\lambda}) = \chi_H(V_{\lambda}) |_{t = q^{-1}}$. 
\end{cor}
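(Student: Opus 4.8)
The plan is to interpolate between the two fibers inside the flat family $p:\mf{X}\to\mathbb{A}^1$, using the canonical bundle $\widetilde{\mc{P}}$ together with its $SL_2\times\s_n$-action. By the theorem above, $\widetilde{\mc{P}}|_{p^{-1}(0)}\simeq\mc{P}$ and $\widetilde{\mc{P}}|_{p^{-1}(\bc)}\simeq\mc{R}$ for $\bc\neq 0$; restricting to the relevant fibers gives $\widetilde{\mc{P}}_{y_{\stair}}\simeq\mc{P}_{\stair}$ and $\widetilde{\mc{P}}_{x_{\stair}}\simeq\mc{R}_{\stair}$, compatibly with the $SL_2\times\s_n$-actions (for $\mc{P}_{\stair}$ one restricts the $GL_2$-action to $SL_2$). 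The fixed component $\mathbb{A}_{\stair}\simeq\mathbb{A}^1$ of $\mf{X}^T$ meets $p^{-1}(0)$ in $y_{\stair}$ and each $p^{-1}(\bc)$ in $x_{\stair}$, so it is exactly the arc joining these two fibers, and I would restrict $\widetilde{\mc{P}}$ to it.

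First I would observe that $SL_2$ fixes $\mathbb{A}_{\stair}$ pointwise: the point of $\mathbb{A}_{\stair}$ over $0$ is $y_{\stair}$, which is $GL_2$-fixed and hence $SL_2$-fixed, while the point over any $\bc\neq 0$ is $x_{\stair}$, the unique $SL_2$-fixed point of $\cX_{\bc}$ by Theorem~\ref{thm:fixedA1}. Thus $\mathbb{A}_{\stair}\subset\mf{X}^{SL_2}$, so $SL_2$ acts on $\widetilde{\mc{P}}|_{\mathbb{A}_{\stair}}$ fiberwise (covering the trivial action on the base); the fiberwise $\s_n$-action does the same and commutes with it, making $\widetilde{\mc{P}}|_{\mathbb{A}_{\stair}}$ an $SL_2\times\s_n$-equivariant bundle over $\mathbb{A}_{\stair}$ with trivial base action.

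The heart of the matter is then the constancy of the fiber. For the reductive group $G=SL_2\times\s_n$ acting on a $G$-equivariant vector bundle $\mathcal{V}$ over a connected base with trivial base action, complete reducibility yields an isotypic decomposition $\mathcal{V}\simeq\bigoplus_{\rho}\mathcal{W}_{\rho}\otimes\rho$ into $G$-subbundles, with $\mathcal{W}_{\rho}=\mathcal{H}om_{G}(\rho\otimes\mathcal{O},\mathcal{V})$ locally free; the multiplicities $\rk\mathcal{W}_{\rho}$ are then locally constant, hence constant on $\mathbb{A}_{\stair}\simeq\mathbb{A}^1$. Applied to $\mathcal{V}=\widetilde{\mc{P}}|_{\mathbb{A}_{\stair}}$ and comparing the fibers over $0$ and over $\bc\neq 0$, this gives $\mc{P}_{\stair}\simeq\widetilde{\mc{P}}_{y_{\stair}}\simeq\widetilde{\mc{P}}_{x_{\stair}}\simeq\mc{R}_{\stair}$ as $SL_2\times\s_n$-modules. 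The delicate point, and where I expect the main obstacle, is precisely that the isotypic components are \emph{subbundles} of constant rank: this rests on reductivity (so the fiberwise isotypic projectors are algebraic) together with local freeness over the smooth curve $\mathbb{A}^1$, and deserves care.

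Finally I would extract the character identity. Writing $\mc{R}_{\stair}=\bigoplus_{\lambda}\pi_{\lambda}\otimes V_{\lambda}$ as in (\ref{eq:Ldecomp}) and $\mc{P}_{\stair}=\bigoplus_{\mu}V_{\mu}\otimes\pi_{\mu}$ as in \S\ref{sec:Procesi}, the isomorphism of $\s_n\times SL_2$-modules identifies, for each $\lambda$, the $SL_2$-module $V_{\lambda}$ of $\mc{R}_{\stair}$ with the restriction to $SL_2$ of the $GL_2$-module $V_{\lambda}$ of $\mc{P}_{\stair}$. Since $T\subset SL_2$ embeds in the diagonal $2$-torus $H\subset GL_2$ as $s\mapsto\mathrm{diag}(s,s^{-1})$, restricting a character from $H$ to $T$ is the substitution $q\mapsto s,\ t\mapsto s^{-1}$, i.e.\ $t=q^{-1}$. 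Hence $\chi_T(V_{\lambda})=\chi_H(V_{\lambda})|_{t=q^{-1}}$, consistent with the value $\widetilde{K}_{\lambda,\stair}(q,q^{-1})$ found in Proposition~\ref{schur}.
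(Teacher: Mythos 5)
Your proposal is correct and follows the route the paper intends: the corollary is stated as an immediate consequence of the preceding theorem, deduced by rigidity of $SL_2\times\s_n$-representations along the pointwise-fixed arc $\mathbb{A}_{\stair}$ joining $y_{\stair}$ to $x_{\stair}$, followed by the observation that restricting characters from $H$ to $T\subset SL_2$ is the substitution $t=q^{-1}$. You merely spell out the constancy-of-isotypic-multiplicities argument that the paper leaves implicit.
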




\small{

}
\end{document}